\DeclareRobustCommand{\rvdots}{%
	\vbox{
		\baselineskip4\p@\lineskiplimit\z@
		\kern-\p@
		\hbox{.}\hbox{.}\hbox{.}
}}
\renewcommand{\epsilon}{\varepsilon}
\newtheorem{thm}{Theorem}[section]
\newcommand{\defend}{$\clubsuit$}
\newtheorem{definition}[thm]{Definition}
\newenvironment{defin}{\begin{definition}}{\hspace*{\fill} \defend \end{definition}}
\newtheorem{question}[thm]{Question}
\newtheorem{rem}[thm]{Remark}\newtheorem{lem}[thm]{Lemma}
\newtheorem{cor}[thm]{Corollary}
\newtheorem{remark}[thm]{Remark}
\theoremstyle{definition}
\newcommand{\defeq}{\mathrel{:=}}
\newcommand{\seteq}{\defeq}
\newcommand{\lA}{\mathcal{L}(\Aaut)}
\newcommand{\set}[2]{\left\{\,#1\mid#2\,\right\}}
\newcommand{\N}{\mathbb{N}}
\newcommand{\Z}{\mathbb{Z}}
\renewcommand{\leq}{\leqslant}
\renewcommand{\geq}{\geqslant}
\newcommand{\Aaut}{\mathcal{A}}
\newcommand{\makeset}[2]{\left\lbrace #1 \;\middle|\;
  \begin{tabular}{@{}l@{}}
    #2
   \end{tabular}
  \right\rbrace}
\renewcommand{\leq}{\leqslant}
\renewcommand{\geq}{\geqslant}
\thanks{{ \hspace{-.2in}\textit{MSC} (2020): 20F10; 20M35; 20F05; 03D05; 03D40.}\\
{ \textit{Keywords: group theory, automata, language theory, word problem}}\\
For the purpose of open access, the authors have applied a CC BY public copyright licence to any accepted manuscript version arising.
}
\newcommand{\lclasses}{\mathcal{C}}
\title{On epiC groups over language class C}
\author{Raad Al Kohli, Collin Bleak, Luna Elliott}
\renewcommand{\emph}{\textbf}
\begin{document}
\maketitle

\begin{abstract}We introduce a new framework linking group theory and formal language theory which generalizes a number of ways these topics have been linked in the past. For a language class C in the Chomsky hierarchy, we say a group is epiC if it admits a language $L$ over a finite (monoidal) generating set $X \subseteq G$ in the class C such that the image of L under the evaluation map is $G \setminus \{1_G\}$. We provide some examples of epiC groups and prove that the property of being epiC is not dependent on the generating set chosen. We also prove for language class C in the Chomsky hierarchy that the class epiC is closed under passage to finite index subgroups and finite index overgroups, taking extensions, and taking graph products of finitely many epiC groups.  We also provide a characterization of the property of having solvable word problem within the framework of epiC groups.  Finally, we show two theorems that stand at counterpoint: There are uncountably many pairwise non-isomorphic groups which fail to be in the broadest class epiRE, but at the same time, there are uncountably many pairwise non-isomorphic groups which are in the thinnest class epiRegular.
\end{abstract}
\tableofcontents

\section{Introduction}
We denote by $\lclasses$ the following set 
\[\lclasses\seteq\{\operatorname{Regular}, \textrm{CF}, \textrm{CS}, \textrm{RE}\}.\]
If C $\in \lclasses$, then C represents a standard formal language class from the Chomsky hierarchy \cite{Chomsky56, ChomskySchutzenberger}, with CF representing \emph{Context-Free}, CS representing \emph{Context Sensitive}, and RE representing \emph{Recursively Enumerable}.

In this article, for each C $\in \lclasses$, we introduce the class of groups epiC.  \emph{A finitely generated group $G=\langle X \rangle$
is epiC} if there is a language $L_G\subseteq X^+$ in class $C$ so that the evaluation map $eval_G:L_C\to G$ has image equaling $G\setminus\{1_G\}$.

The classes of groups epiC nest in the obvious fashion, induced by the nesting of language classes 
\[
Regular\subsetneq CF\subsetneq CS \subsetneq RE.
\]
By definition, the class epiRegular is a natural broadening of the class of groups with rational cross section, a notion which can be traced back to Eilenberg and Sch\"{u}tzenberger from \cite{EilenbergSchutzenberger1969} and which was explored in depth by Gilman and also Bodart  \cite{Gilman1987, bodartrational}.  As our results show, epiRegular groups also contain the autostackable groups of \cite{Autostackable} (and hence, also the asynchronously automatic groups and particular groups of topical interest such as Thompson's group $F$ \cite{autostackableF}). Finally, those groups which admit a regular language of geodesics representing all group elements (see \cite{Geodesic}) are also epiRegular, since the identity will be represented by the empty string which can be removed.  Thus, e.g., extra-large type Artin groups and Garside groups are epiRegular.
It follows that each of the classes epiC can be thought of as a broadening of these previously studied classes.

Note as well that it is natural to think of epiRegular as a generalisation of the class of groups with coword problem being a regular language (the finite groups by Anisimov's Theorem \cite{Anisimov}), epiCF as a generalisation of the class co$\mathcal{CF}$ of groups (see \cite{HRRTcocf,LehnertSchweitzer,BleakMatucciLehnert}) and epiCS as a generalisation of Holt and R\"{o}ver's co-indexed groups \cite{HoltRovercoindexed}. 
 The class of epiRegular groups was initially proposed by James Belk as an interesting class of groups to consider, after some analysis of a construction of  Ville Salo appearing in a preprint of \cite{salo2021graph}.  That conversation led to an investigation of epiRegular groups in the first author's dissertation \cite{AlKohliDiss}, which is the foundation of the work described here.

\subsection{Results}
\label{subsec:Results}

Here we present our main results.  

The first result is that for each language class C, the property of being epiC is a property of the groups themselves, and not of the choices of finite generating sets used to demonstrate membership in the class epiC.
\setcounter{section}{2}

\begin{restatable}{thmLet}{thmPropGrps}\label{thm:PropGrps} Suppose C\,\,\(\in\{Regular, CF, CS, RE\}\) and $G$ is a group with finite subsets $X$ and $Y$ where both $X$ and $Y$ generate $G$ as a monoid.  If $G$ is epiC with respect to $X$ then $G$ is epiC with respect to $Y$.
\end{restatable}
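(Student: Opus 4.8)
The plan is to transport the witnessing language across generating sets by a monoid homomorphism between the free monoids, and then to invoke closure of each class C under (non-erasing) homomorphic images. Since $X$ and $Y$ play symmetric roles, it suffices to produce, from a language $L \subseteq X^+$ in class C with $eval_X(L) = G \setminus \{1_G\}$, a language $L' \subseteq Y^+$ in class C with $eval_Y(L') = G \setminus \{1_G\}$.

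First I would fix, for each generator $x \in X$, a word $w_x \in Y^+$ with $eval_Y(w_x) = x$; this is possible because $Y$ generates $G$ as a monoid, so every element of $G$, in particular every $x \in X$, is a product of elements of $Y$. The only point requiring care is that I insist the representatives $w_x$ be nonempty. If $x \neq 1_G$ this is automatic, since the empty word evaluates to $1_G$. If $1_G \in X$, I choose any non-identity $y \in Y$ (which exists whenever $G$ is nontrivial) together with a word $v \in Y^*$ evaluating to $y^{-1}$, and set $w_{1_G} = yv \in Y^+$; the degenerate case $G = \{1_G\}$ is handled separately, as then $G \setminus \{1_G\} = \emptyset$ is witnessed by the empty language, which lies in every class C. These choices define a monoid homomorphism $\phi \colon X^* \to Y^*$ by $x \mapsto w_x$, which is non-erasing by construction.

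Next I would set $L' = \phi(L)$ and verify the two required properties. Because $\phi$ sends each letter to a word with the same image in $G$, the evaluation homomorphisms satisfy $eval_Y \circ \phi = eval_X$ on all of $X^*$; hence $eval_Y(L') = eval_X(L) = G \setminus \{1_G\}$. Moreover $L' \subseteq Y^+$, since $\phi$ is non-erasing and $L \subseteq X^+$, so no word of $L'$ is empty (equivalently, an empty image would evaluate to $1_G$, contradicting $eval_X(L) = G \setminus \{1_G\}$). It then remains to check that $L'$ lies in class C, for which I invoke the standard fact that each of the classes $Regular$, $CF$, $CS$, $RE$ is closed under non-erasing homomorphic images.

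I expect the main obstacle to be the context-sensitive case. Whereas $Regular$, $CF$ and $RE$ are closed under arbitrary homomorphisms, the class $CS$ is only closed under non-erasing ones, so an erasing letter substitution could in principle take us outside $CS$. This is precisely why the construction above is engineered to keep $\phi$ non-erasing, the slightly delicate sub-point being the representation of $1_G$, when it happens to be a generator in $X$, by a nonempty word over $Y$. Once non-erasingness is secured, a single uniform closure argument covers all four classes at once.
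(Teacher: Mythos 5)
Your proof is correct and takes essentially the same route as the paper's: both transport the witnessing language through a non-erasing monoid homomorphism $x \mapsto w_x$ (choosing nonempty representatives $w_x \in Y^+$, with the trivial group treated as a separate degenerate case) and then invoke the standard closure of each of Regular, CF, CS, RE under homomorphisms that do not map letters to the empty word. Your explicit construction $w_{1_G} = yv$ for the case $1_G \in X$ is precisely the point the paper compresses into the remark that, since $G$ is non-trivial, one can choose $w_x \neq \varepsilon$.
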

Our next result is that even the smallest class, epiRegular, is a fairly broad class of groups.

\begin{restatable}{thmLet}{thmContainment} \label{thm:Containments}
 The class of epiRegular groups contains:
\begin{enumerate}
    \item all Baumslag-Solitar groups $BS(m,n)$,
    \item all automatic groups, and
    \item all autostackable groups.
\end{enumerate}
\end{restatable}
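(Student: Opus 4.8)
The plan is to reduce all three parts to a single clean sufficient condition. Say that $G$ admits a \emph{rational cross-section} if for some finite monoid generating set $X$ there is a regular language $N\subseteq X^*$ whose evaluation map restricts to a bijection $N\to G$. I claim every such group is epiRegular. Indeed, by injectivity there is exactly one word $w_1\in N$ with $eval_G(w_1)=1_G$, and removing a single word from a regular language yields a regular language, so $N'\seteq N\setminus\{w_1\}$ is regular. Any empty word lying in $N$ must equal $w_1$, so $N'\subseteq X^+$, and $eval_G$ maps $N'$ bijectively onto $G\setminus\{1_G\}$; hence $N'$ witnesses that $G$ is epiRegular with respect to $X$. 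By Theorem~\ref{thm:PropGrps} the generating set is irrelevant, so for each class it suffices to exhibit a rational cross-section over any convenient generating set.

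Part (iii) is then immediate, since a regular language of unique normal forms is part of the data of an autostackable structure \cite{Autostackable}: that normal-form set is regular and contains exactly one representative of each group element, i.e. it is a rational cross-section. Part (ii) follows the same way, because every automatic group possesses an automatic structure with uniqueness, that is, a regular language of representatives in bijection with $G$, which is again a rational cross-section. (Alternatively one may cite that automatic groups are autostackable and deduce (ii) from (iii).)

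The genuine work is in part (i). After normalising via $BS(m,n)\cong BS(n,m)\cong BS(-m,-n)$ we may assume $1\leq m\leq n$, and we take $X=\{a,a^{-1},t,t^{-1}\}$. Since $BS(m,n)=\langle a,t\mid ta^mt^{-1}=a^n\rangle$ is an HNN extension of $\langle a\rangle\cong\Z$ with associated subgroups $\langle a^m\rangle$ and $\langle a^n\rangle$, Britton's Lemma together with the transversals $\{0,\dots,m-1\}$ and $\{0,\dots,n-1\}$ supplies a unique normal form $a^{c_0}t^{\ve_1}a^{c_1}\cdots t^{\ve_k}a^{c_k}$ for each group element. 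I would encode such a form as a string over $X$, spelling each block $a^{c}$ as a run of $a$'s or of $a^{-1}$'s, and then argue the resulting set $N$ is regular. The key structural point is that only the leading exponent $c_0$ is unbounded (contributing a factor recognised by $a^*\cup(a^{-1})^*$), whereas every later $c_i$ lies in a fixed finite transversal; consequently all admissibility conditions --- each intermediate block belongs to a fixed finite set, and no Britton pinch is present, which on a bounded block amounts to the finite tests ``$c_i$ a multiple of $m$ occurring between $t$ and $t^{-1}$'' and ``$c_i$ a multiple of $n$ occurring between $t^{-1}$ and $t$'' --- are local and decidable by a finite-state machine remembering only the current block and the sign of the preceding $t$. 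This exhibits the rational cross-section, and the reduction above completes the proof.

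I expect the only real obstacle to sit in part (i): pinning down the HNN normal form with transversals precisely enough that the ``no further reduction'' requirement becomes genuinely finite-state, and checking that the encoding conventions for the blocks $a^{c_i}$ (spelling $a^0$, and never merging adjacent blocks across a $t^{\pm1}$) preserve both regularity of $N$ and bijectivity of evaluation. Parts (ii) and (iii) should need nothing beyond quoting the appropriate normal-form results once the rational cross-section reduction is recorded.
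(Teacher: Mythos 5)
Your overall strategy---show rational cross-section implies epiRegular by deleting the unique identity representative, then invoke Theorem \ref{thm:PropGrps} to make the generating set irrelevant---is exactly the paper's strategy, and your part (ii) matches the paper's treatment (both rest on the fact that every automatic group admits an automatic structure with uniqueness, citing \cite{WordProcessingInGroups}). For part (i) you take a genuinely different route: the paper simply cites \cite{bodartrational}, where rational cross-sections for $BS(m,n)$ follow from closure of the class of groups with rational cross-section under certain Bass--Serre constructions, whereas you build the cross-section directly from Britton's Lemma. Your sketch is sound: with $A=\langle a^m\rangle$, $B=\langle a^n\rangle$ and the transversals $\{0,\dots,m-1\}$ and $\{0,\dots,|n|-1\}$, the admissibility conditions are indeed local (bounded counting plus the sign of the preceding $t$-letter), so the normal-form language is regular, and evaluation is bijective by the HNN normal form theorem; this buys a self-contained proof where the paper outsources the work. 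One slip: you cannot normalise to $1\leq m\leq n$. By Moldavanskii's classification, $BS(2,-3)$ is not isomorphic to any $BS(m,n)$ with $m,n>0$; you may only assume $1\leq m\leq |n|$. This is harmless, since nothing in your argument uses positivity of $n$.

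The genuine gap is in part (iii). You assert that a regular language of unique normal forms is ``part of the data of an autostackable structure.'' It is not: in Definition \ref{def:autostackable} the data is a normal-form set $N$, a constant $k$, and a flow function $\phi\colon N\times X\to X^*$, and the only regularity hypothesis is that $graph(\phi)$ is \emph{synchronously} regular as a language of padded triples over the trinary extension $\overline{X}$; no regularity of $N$ itself is assumed. Extracting the regularity of $N$ is precisely the content of the paper's Theorem \ref{thm:autostackable}, and it is the one ingredient of Theorem \ref{thm:Containments} that the paper flags as apparently new---so it is unlikely to be quotable from \cite{Autostackable} in the form you want. The fix is short: take the regular language $(g_\phi)^t$ over $\overline{X}$, apply the monoid homomorphism projecting each triple letter onto its first coordinate, then the homomorphism erasing the padding symbol $\#$; regular languages are closed under such homomorphic images, and the resulting language is exactly $N$. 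As written, your proof of (iii) assumes this conclusion rather than proving it; inserting the projection-and-erasure argument closes the gap.
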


The first two points above are already known through the theory of groups with rational cross section (see \cite{bodartrational,WordProcessingInGroups}).  Guba and Sapir show that Thompson's group $F$ has rational cross section \cite{GubaDehnF}, so $F$ is certainly epiRegular.  

Meanwhile, Corwin, et al. show $F$ is autostackable in \cite{autostackableF}.  We prove autostackable groups have rational cross section in Theorem \ref{thm:autostackable}, and hence they are epiRegular as well.
This result about autostackable groups is apparently new. 
However, we are not highlighting this result here as it is not too surprising in the context of  the existing work of Brittenham, et al., and of Bodart (see \cite{bodartrational,Autostackable}).
Recall the class $co\mathcal{CF}$ of groups with context-free coword problem introduced in \cite{HRRTcocf}.   Theorem \ref{thm:Containments} implies that the Baumslag-Solitar groups $BS(m,n)$ are in epiCF, but it is shown in \cite{HRRTcocf} that $BS(1,2)$ is not a $co\mathcal{CF}$ group.

\begin{cor}
    There are groups in the class epiRegular (and thus in epiCF) that are not in the class $co\mathcal{CF}$.
    \end{cor}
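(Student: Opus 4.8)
The plan is to exhibit a single explicit witness rather than argue in generality, and the natural candidate is the Baumslag--Solitar group $BS(1,2)$. By part (i) of Theorem~\ref{thm:Containments}, every Baumslag--Solitar group $BS(m,n)$ is epiRegular, so in particular $BS(1,2)$ lies in epiRegular.

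Next I would record the trivial class inclusion epiRegular $\subseteq$ epiCF, which is immediate from the inclusion $Regular \subseteq CF$ of language classes. Concretely, fixing a finite monoidal generating set $X$ for $BS(1,2)$, a regular language $L \subseteq X^+$ whose evaluation map satisfies $eval_{BS(1,2)}(L) = BS(1,2)\setminus\{1\}$ is in particular a context-free language, and the very same map then witnesses membership in epiCF with respect to $X$. (Theorem~\ref{thm:PropGrps} guarantees this is independent of the chosen generating set, though that robustness is not actually needed for the argument.) Hence $BS(1,2)$ is epiCF as well.

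Finally, I would invoke the result of \cite{HRRTcocf} that $BS(1,2)$ is not a $co\mathcal{CF}$ group. Combining the three observations, $BS(1,2)$ is epiRegular (and therefore epiCF) but not $co\mathcal{CF}$, which establishes the corollary. There is essentially no obstacle here: all the substantive content is imported, namely that $BS(m,n)$ is epiRegular (Theorem~\ref{thm:Containments}) and that $BS(1,2)$ fails to be $co\mathcal{CF}$ (from \cite{HRRTcocf}). The only new verification is the one-line class inclusion epiRegular $\subseteq$ epiCF, so the proof is really just an assembly of facts already in hand.
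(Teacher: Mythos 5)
Your proposal is correct and matches the paper's own argument exactly: the paper also takes $BS(1,2)$ as the witness, citing Theorem~\ref{thm:Containments} for membership in epiRegular (hence epiCF, via the trivial inclusion of language classes) and \cite{HRRTcocf} for the failure of the $co\mathcal{CF}$ property. Nothing further is needed.
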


Recall that epiRegular groups contain the class of groups with rational cross section, and  \cite{bodartrational} states that all Baumslag-Solitar groups and all automatic groups have rational cross sections, as a consequence of groups with rational cross-section being closed under some Bass--Serre Theory constructions. 
 Following a construction in \cite{Hermillergraphproduct} we show that the class of groups with rational cross section is closed under graph products.  This closure property for graph products holds more widely, and in particular, for each of the classes epiC.

The following theorem is an amalgamation of Theorems \ref{thm:closed_under_extenions} and \ref{thm:graphproductepiRegular}, below.

\begin{restatable}{thmLet}{thmClosureProps}\label{thm:ClosurePropGroups}
Suppose C\,\,\(\in\{Regular, CF, CS, RE\}\). The class of epiC groups is closed under graph products (of finitely many groups) and extensions.
\end{restatable}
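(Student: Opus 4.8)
The plan is to handle the two closure properties separately, in each case building an explicit epiC language for the combined group out of the epiC languages of the constituents and invoking the standard closure properties (union, concatenation, positive closure, non-erasing homomorphism, and $\emptyword$-free substitution) shared by every class in $\lclasses$. By Theorem~\ref{thm:PropGrps} we are free to choose whatever finite monoid generating sets are convenient.

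\textbf{Extensions.} Suppose $1 \to N \to G \to Q \to 1$ is exact with $N = \langle Z\rangle$ and $Q = \langle Y\rangle$ both epiC, witnessed by $L_N \subseteq Z^+$ and $L_Q \subseteq Y^+$. First I would fix a set-theoretic lift $\hat y \in G$ of each $y \in Y$ and take $X = Z \sqcup \hat Y$ as a monoid generating set of $G$, where $\hat Y = \{\hat y : y \in Y\}$; the renaming $y \mapsto \hat y$ extends to a letter-to-letter, hence non-erasing, homomorphism sending $L_Q$ to a language $\widehat{L_Q}$ in class $C$. Since $1_N$ is expressible as a positive word $w_1 \in Z^+$, the language $L_N' \seteq L_N \cup \{w_1\}$ is in class $C$ and evaluates onto all of $N$. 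I would then set
\[ L_G \seteq \bigl(L_N' \cdot \widehat{L_Q}\bigr) \cup L_N. \]
The first summand evaluates onto precisely those elements of $G$ whose image in $Q$ is nontrivial: for a nontrivial $q$ represented by $u \in L_Q$, the word $\hat u$ evaluates to one preimage of $q$, and prepending every element of $N$ sweeps out the whole coset, while none of these equals $1_G$ since all project to $q \neq 1_Q$. The second summand contributes exactly $N \setminus \{1_G\}$. Hence $\mathrm{eval}_G(L_G) = G \setminus \{1_G\}$, and $L_G \in C$ because the four classes are closed under non-erasing homomorphism, concatenation, and finite union.

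\textbf{Graph products.} Let $G$ be the graph product over a finite simplicial graph $\Gamma$ with vertex set $V$ and each vertex group $G_v = \langle X_v\rangle$ epiC via $L_v \subseteq X_v^+$; take the $X_v$ pairwise disjoint, so that $X = \bigsqcup_v X_v$ generates $G$. Following the normal form used in \cite{Hermillergraphproduct}, the idea is to realise $L_G$ as a \emph{language substitution} applied to a purely combinatorial regular language of syllable types. By the normal form theorem for graph products, each $g \in G \setminus \{1_G\}$ has reduced syllable expressions $g_1\cdots g_k$ (with $g_j \in G_{v_{i_j}} \setminus \{1\}$), all of the same length and related by shuffles of adjacent commuting syllables, so that fixing the lexicographic representative singles out a canonical type sequence $(v_{i_1}, \ldots, v_{i_k}) \in V^+$. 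I would let $R \subseteq V^+$ be the set of all such canonical type sequences, then define the $\emptyword$-free substitution $\sigma$ by $\sigma(v) = L_v$ and set $L_G \seteq \sigma(R)$. Correctness is then immediate: every word of $\sigma(R)$ has the form $u_1 \cdots u_k$ with $u_j \in L_{v_{i_j}}$ evaluating to a nontrivial syllable along a reduced canonical type sequence, hence evaluates to a nonempty reduced element of $G$, never $1_G$; conversely every nontrivial $g$ arises by substituting representatives from the $L_{v_{i_j}}$ into its canonical type sequence, so $\mathrm{eval}_G(L_G) = G \setminus \{1_G\}$. Since $R$ is regular and each $L_v \subseteq X_v^+$ lies in $C$ and is $\emptyword$-free, $\sigma(R) \in C$: Regular, CF and RE are closed under substitution by languages of the same class, and CS is closed under $\emptyword$-free substitution, which is exactly the situation here. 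The free and direct product cases ($\Gamma$ edgeless, respectively complete) are the two extremes of this single construction.

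\textbf{Main obstacle.} The substitution closure and the extension argument are routine once set up; the real content lies in the combinatorics of the graph product normal form, namely verifying that the language $R$ of canonical (reduced, lexicographically shuffled) type sequences is \emph{regular}. I expect to isolate this as a lemma and prove it via a finite automaton whose state records, for each vertex, whether a future occurrence of that vertex could be shuffled back so as to merge (a subset of $V$), together with the local lexicographic condition, or else by citing the regularity of lexicographic normal forms in trace monoids. Throughout I would take care that every language operation used is non-erasing and $\emptyword$-free, so that the class CS is genuinely preserved and not merely the three classes closed under arbitrary homomorphism and substitution.
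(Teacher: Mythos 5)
Your proposal is correct, and its two halves have rather different relationships to the paper. The extension half is essentially the paper's own argument (Theorem \ref{thm:closed_under_extenions}): the paper uses $\mathcal{L}_1 \cup \mathcal{L}_2 \cup \mathcal{L}_1\mathcal{L}_2$, and your $\bigl(L_N \cup \{w_1\}\bigr)\cdot \widehat{L_Q} \cup L_N$ is the same language in light disguise, both resting only on closure under finite union and concatenation (one pedantic point: your word $w_1$ representing $1_N$ exists only when $N$ is nontrivial, so the degenerate case $N=1$, where $G\cong Q$, should be dismissed separately). The graph product half, by contrast, takes a genuinely different route. The paper works at machine level: it takes Hermiller--Meier's admissible graph --- a finite automaton accepting exactly the type strings of proper strings --- and glues a local acceptor for each vertex group onto its states, with a separate and fairly delicate construction for each of the four machine types (start-state identification for FSAs, stack-flushing $\varepsilon$-transitions for PDAs, boundary-marker surgery for LBAs, and the TM case). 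You instead work at language level: a regular language $R$ of reduced type sequences followed by the $\varepsilon$-free substitution $v \mapsto L_v$, and then one uniform appeal to closure of Regular, CF and RE under substitution and of CS under $\varepsilon$-free substitution. The combinatorial core is the same in both proofs: your self-identified ``main obstacle'', the regularity of $R$, is precisely what the admissible graph encodes, and the subset-of-$V(\Gamma)$ automaton you sketch (track which vertices have an occurrence commuting with everything read since) proves it; likewise your appeal to the graph-product normal form is the paper's Lemma \ref{typestringuniquelem} together with Lemma \ref{graphproductepiexpression}. What your route buys is uniformity and modularity --- no machine-by-machine bookkeeping, all four classes handled identically --- at the cost of invoking a stronger (though still textbook) closure property than the union/concatenation used elsewhere in the paper. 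One simplification is available to you: the lexicographic canonicity you impose on $R$ is unnecessary, since demonstrating epiC does not require unique representatives; taking $R$ to be the set of \emph{all} reduced type sequences is regular by your own automaton, covers every nontrivial element, and spares you the trace-monoid normal-form lemma entirely.
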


The original public draft of this article did not show that for 
C\,\,\(\in\{Regular, CF, CS, RE\}\) the class epiC is closed under passage to finite index subgroups. 
 We are grateful that in response, Corentin Bodart sent us an argument to that effect which we include now as part of our Theorem \ref{thm:ClosureFI}. 
 Then, and before the release of the amended version of this article, Andr\'{e} Carvalho 
 and Carl-Fredrik Nyberg-Brodda independently released \cite{CFpaper} which also contains a proof of this result (closure under passage to finite index subgroups) for the classes of languages Regular, CF, and RE (and any full semi-AFL).
 
\begin{restatable}{thmLet}{thmClosureFI}\label{thm:ClosureFI}
For C\,\,\(\in\{Regular, CF, CS, RE\}\) the class epiC is closed under passage to finite index subgroups and to finite index overgroups.
\end{restatable}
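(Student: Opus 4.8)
The plan is to prove both closure statements by combining Theorem~\ref{thm:PropGrps} (so that we may replace any generating set by a convenient one) with the standard Reidemeister--Schreier machinery, phrased in transducer language so that the closure of each class in $\lclasses$ under the relevant rational operations does the work. Throughout, let $H \leq G$ with $[G:H] = n < \infty$, fix a right transversal $T = \{t_1, \dots, t_n\}$ with $t_1 = 1_G$, and write $\overline{g} \in T$ for the representative of the coset $Hg$.

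For the overgroup direction, assume $H$ is epiC, witnessed by a language $L_H \subseteq Y^+$ over a finite symmetric monoidal generating set $Y$ of $H$ with $\mathrm{eval}_H(L_H) = H\setminus\{1_H\}$. Then $X \defeq Y \cup \{t_2, \dots, t_n\}$ is a finite monoidal generating set of $G$, since every $g\in G$ is $h t_i$ with $h \in H = \langle Y\rangle_{\mathrm{mon}}$ and $t_i \in T$. Using this unique decomposition $g = h t_i$, one checks that
\[
L_G \;\defeq\; L_H \;\cup\; \bigcup_{i=2}^{n}\bigl(\{t_i\}\cup L_H\,t_i\bigr)
\]
has $\mathrm{eval}_G(L_G) = G\setminus\{1_G\}$: the elements with $i=1$ (those in $H\setminus\{1_H\}$) are hit by $L_H$, and for $i \geq 2$ the letter $t_i$ hits $t_i$ while $L_H t_i$ hits $(H\setminus\{1_H\})t_i$, and no word evaluates to $1_G$. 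Since each class in $\lclasses$ is closed under finite union and under concatenation with a finite (hence regular) language, $L_G$ lies in C; by Theorem~\ref{thm:PropGrps}, $G$ is epiC. Only right-multiplication by the fixed letters $t_i$ is used, so no erasing occurs and the construction is valid for $\mathrm{CS}$ as well.

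For the subgroup direction, assume $G$ is epiC; by Theorem~\ref{thm:PropGrps} we may take a witness $L_G \subseteq X^+$ over a finite symmetric generating set $X = X^{-1}$ of $G$. The right action of $G$ on the cosets $H\backslash G$ gives a monoid homomorphism $X^* \to \mathrm{Sym}(H\backslash G)$, so $L_{\to H} \defeq \{w \in X^* : \mathrm{eval}_G(w)\in H\}$ is recognised by the finite automaton with state set $H\backslash G$, initial and accepting state $H$, and transitions given by this action; hence $L_{\to H}$ is regular. Let $Y \defeq \{\,t x \overline{tx}^{-1} : t\in T,\ x\in X\,\} \subseteq H$ be the Reidemeister--Schreier generators. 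Because $X = X^{-1}$, a direct check gives $(t x \overline{tx}^{-1})^{-1} = s x^{-1}\overline{s x^{-1}}^{-1}$ with $s = \overline{tx}$, so $Y = Y^{-1}$; as $Y$ generates $H$ as a group, the symmetric set $Y$ generates $H$ as a monoid, and in particular $H$ is finitely generated.

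Finally, define the letter-to-letter transducer $M$ with state set $T$, initial and unique accepting state $t_1 = 1_G$, and transitions $t \xrightarrow{\,x \,\mid\, t x \overline{tx}^{-1}\,} \overline{tx}$ for $t\in T$, $x\in X$. Its domain is exactly $L_{\to H}$, and the telescoping identity $t_0 x_1 t_1^{-1}\cdots t_{m-1}x_m t_m^{-1} = \mathrm{eval}_G(w)$ (with $t_0 = t_m = 1_G$ for $w\in L_{\to H}$) gives $\mathrm{eval}_H(M(w)) = \mathrm{eval}_G(w)$. Applying $M$ to $L_G$ therefore yields a language over $Y$ with $\mathrm{eval}_H\bigl(M(L_G)\bigr) = \mathrm{eval}_G(L_G\cap L_{\to H}) = (G\setminus\{1_G\})\cap H = H\setminus\{1_H\}$. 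The map $M$ is an $\epsilon$-free (indeed length-preserving) rational transduction, and each class in $\lclasses$ is closed under such transductions, so $M(L_G)\in \mathrm{C}$ and $H$ is epiC. The main obstacle is precisely this last closure for $\mathrm{CS}$: context-sensitive languages are not closed under arbitrary (erasing) rational transductions, so it is essential that $M$ never outputs the empty word. This is why we keep every Schreier generator --- including the trivial ones $t x \overline{tx}^{-1} = 1_H$, retained as a harmless extra letter of $Y$ --- rather than deleting them, so that $M$ stays length-preserving and the $\epsilon$-free closure of $\mathrm{CS}$ applies.
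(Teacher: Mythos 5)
Your proposal is correct and takes essentially the same route as the paper: the overgroup construction is exactly the paper's language $L_H \cup (T\setminus\{1\}) \cup L_H\,(T\setminus\{1\})$, and the subgroup argument is the paper's (Bodart) Schreier coset-graph proof, with your letter-to-letter transducer $M$ being precisely the paper's decomposition into inverse homomorphism, intersection with the regular language of closed paths at $H$, and the non-erasing edge-to-Schreier-generator map $k$. You even flag the same key caveat as the paper, namely that no letter is mapped to the empty word, which is what keeps the CS case valid.
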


There is an interplay between having a machine that accepts a representative product string for every non-trivial element of a finitely generated group $G$ and the property of $G$ having a solvable word problem.  The following is shown in Subsection \ref{subsec:solvableWP}.

\setcounter{section}{7}
\setcounter{thm}{2}
\begin{restatable}{thmLet}{thmSolvableWP}\label{thm:SolvableWP}
    Let \(G\) be a group with finite generating set \(X\).  The following are equivalent:
    \begin{itemize}

    \item \(G\) has a solvable word problem; and
    \item \(G\) admits a recursively enumerable presentation \(G\cong\langle X|R\rangle\) and is in the class epiRE.
    \end{itemize}
\end{restatable}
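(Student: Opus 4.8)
The statement is, at bottom, the classical fact that a subset of a countable set is decidable if and only if both it and its complement are recursively enumerable (Post's theorem); the two bullet points simply repackage the two semidecidability halves of the word problem in group-theoretic language, and I would prove the two implications separately.

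For the forward implication, suppose $G$ has solvable word problem. Then membership in the set $R = \{\,w \in (X \cup X^{-1})^* : eval_G(w) = 1_G\,\}$ is decidable, so $R$ is recursive and in particular recursively enumerable; since the normal closure of $R$ is exactly the kernel of the canonical surjection $F(X) \twoheadrightarrow G$, we have $\langle X \mid R\rangle \cong G$, giving the required RE presentation. For the epiRE witness I would take $L = \{\,w \in X^+ : eval_G(w) \neq 1_G\,\}$, whose membership is again decidable, hence $L$ is recursive and a fortiori RE. Because $X$ generates $G$ as a monoid, every $g \in G\setminus\{1_G\}$ is represented by some nonempty positive word, which then lies in $L$, while $eval_G(L) \subseteq G\setminus\{1_G\}$ by construction; thus $eval_G(L)=G\setminus\{1_G\}$ and $G$ is epiRE.

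For the reverse implication, assume $G\cong\langle X\mid R\rangle$ with $R$ recursively enumerable and that $G$ is epiRE via an RE language $L\subseteq X^+$ with $eval_G(L)=G\setminus\{1_G\}$. Recursive enumerability of $R$ lets me enumerate the normal closure $\nc{R}\subseteq F(X)$ (freely reduced products of conjugates of elements of $R^{\pm1}$); consequently the relation $eval_G(u)=eval_G(v)$ is semidecidable, since it holds if and only if the freely reduced form of $u v^{-1}$ eventually appears in this enumeration. To decide, on input a word $w$ over $X \cup X^{-1}$, whether $eval_G(w)=1_G$, I would dovetail two searches. The first enumerates $\nc{R}$ and halts answering ``$=1_G$'' once the reduced form of $w$ appears. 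The second enumerates $L=\{w_1,w_2,\dots\}$ and, for each $i$ in turn, semidecides whether $eval_G(w)=eval_G(w_i)$ (equivalently whether $w\,w_i^{-1}\in\nc{R}$), halting and answering ``$\neq1_G$'' as soon as one such equality is confirmed.

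Correctness rests on the observation that exactly one branch ever halts. If $eval_G(w)=1_G$ the first branch halts, and the second cannot, for a confirmed equality $eval_G(w)=eval_G(w_i)$ would force $eval_G(w_i)=1_G$, contradicting $w_i\in L$. If $eval_G(w)\neq1_G$ the first branch never halts, whereas $eval_G(w)\in G\setminus\{1_G\}=eval_G(L)$ supplies some $w_i\in L$ with $eval_G(w_i)=eval_G(w)$, so that $w\,w_i^{-1}\in\nc{R}$ and the second branch halts. Hence the procedure always terminates with the correct verdict, and $G$ has solvable word problem. The only delicate point, and the one I would flag as the crux, is the reverse direction's need to semidecide the comparison $eval_G(w)=eval_G(w_i)$: this is exactly what the RE presentation buys, and working inside the free group $F(X)$, where the formal inverse $w_i^{-1}$ is available, means no auxiliary computation of positive inverses for the monoid generators is required. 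Conceptually, epiRE furnishes certificates of non-triviality for every non-identity element while the RE presentation furnishes certificates of triviality, and these two complementary certificate systems are jointly exhaustive, which is precisely the content of Post's theorem.
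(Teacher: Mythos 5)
Your proof is correct and takes essentially the same approach as the paper: in the forward direction, decidability of the word problem yields both the recursively enumerable presentation and the (recursive, hence RE) coword language demonstrating epiRE; in the reverse direction, you dovetail an enumeration of the words representing $1_G$ (obtained from the RE presentation) against a search over the epiRE language $L$ for some $w_i$ with $w\,w_i^{-1}$ representing the identity, answering ``trivial'' or ``non-trivial'' accordingly. The paper's algorithm is literally this same dovetailing (it checks whether $(j)f\,w^{-1}$ appears in the enumerated word problem for $(j)f$ ranging over $L$), so the two arguments coincide in substance, with yours spelling out the correctness verification in more detail.
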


\setcounter{section}{1}


As mentioned earlier, we have 
\[
epiRegular\subseteq epiCF \subseteq epiCS \subseteq epiRE.
\]
At this time, we have not yet discovered a proof that any of these classes are different from each other.  We are also unaware of an example group which does not admit rational cross section but which is epiRegular.  A number of potential examples provided by \cite{bodartrational} include the Grigorchuk groups (as finitely generated infinite torsion groups cannot admit rational cross section) as well as the group $C_2\wr (C_2\wr \Z)$.  These groups all fail to admit rational cross sections but might be epiRegular.  Note as well that 
Grigorchuk's ``first'' group $\Gamma$ is co-indexed (see \cite{HoltRovercoindexed}),  and hence is in epiCS.  It follows that $\Gamma$ shall either separate epiRegular from groups with rational cross section, or separate epiRegular from epiCS.

In another direction, there are fast algorithms to enumerate words in a regular language, our proof that recursively presented groups in epiRE have solvable word problem reduces, in the case of finitely presented groups in the class epiRegular, to an argument that resolves the word problem for these groups in time complexity Exp(Exp(n)) for any given initial word of length n  {\it under the assumption} that group elements of $G$ appear in some natural length-based ordering.   As there are finitely presented groups in epiRE where the word problem has time complexity worse than any finite tower of exponentials \cite{Cannonito1966,Cannonito1975} our argument would then prove that $epiRegular\subsetneq epiRE$. However, the definition of epiC for any language class C does not give control on {\it when} group elements get represented in the relevant language, so we cannot use this argument to separate epiRegular from epiRE, for instance.

The reader might wonder if there are any finitely generated groups which fail to be epiRE.  In fact, there are quite a few!
\setcounter{section}{7}

\begin{restatable}{thmLet}{thmUncountable}\label{thm:uncountableNonRE}
Up to isomorphism, there are continuum many finitely generated groups which are not epiRE.  
\end{restatable}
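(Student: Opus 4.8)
The plan is a Baire-category argument in the space of marked groups. As a calibration of the difficulty, observe first that any recursively presented group with unsolvable word problem is already non-epiRE: its word problem is r.e., so if it were also epiRE then by the contrapositive of Theorem \ref{thm:SolvableWP} its word problem would be solvable. This already produces non-epiRE groups, but only countably many, so to reach the continuum I will instead show that being epiRE is a \emph{topologically negligible} condition. Fix $n\geq 2$ and work in the space $\mathcal M_n$ of marked $n$-generated groups, i.e.\ the normal subgroups $N\trianglelefteq F_n$ with the Cantor topology inherited from $2^{F_n}$; this is a compact metrizable Baire space with uncountable perfect part. By Theorem \ref{thm:PropGrps} the property epiRE is independent of the monoidal generating set, so $F_n/N$ is epiRE if and only if it is witnessed on the fixed set $X=\{a_1^{\pm1},\dots,a_n^{\pm1}\}$. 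Fixing an enumeration $(W_e)_{e\in\N}$ of all r.e.\ languages over $X$ and putting $A_e\seteq\{\,N : \mathrm{eval}_{F_n/N}(W_e)=(F_n/N)\setminus\{1\}\,\}$, the epiRE marked groups are exactly $\bigcup_e A_e$.

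I will prove that each $A_e$ is nowhere dense, whence $\bigcup_e A_e$ is meager and its complement is comeager. A comeager subset of $\mathcal M_n$ has cardinality continuum; since $F_n/N$ is countable, each isomorphism type is realised by only countably many $N\in\mathcal M_n$, so a continuum of marked groups realises a continuum of isomorphism types, giving the theorem. Everything therefore reduces to the nowhere-density of a single $A_e$. Note that $N\in A_e$ forces two things: (a) $W_e\cap N=\emptyset$ (no word of $W_e$ is trivial), and (b) every nontrivial coset is represented by some word of $W_e$. Given a basic open set $U$, specified by a finite set $P$ of words declared trivial and a finite set $Q$ of words declared nontrivial, I will produce an open $V\subseteq U$ with $V\cap A_e=\emptyset$ through a dichotomy.

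In the first case there is a word $w\in W_e$ that can be \emph{planted}, meaning $\nc{P\cup\{w\}}\cap Q=\emptyset$; then $V=U\cap\{\,N:w\in N\,\}$ is nonempty and open, every $N\in V$ contains a trivial word of $W_e$, so condition (a) fails on $V$ and we are done. In the remaining, entangled case every $w\in W_e$ satisfies $\nc{P\cup\{w\}}\cap Q\neq\emptyset$, so for every $N\supseteq P$ each element of $F_n/N$ represented by $W_e$ normally generates one of the finitely many elements $\overline q$ ($q\in Q$). I will then defeat condition (b): imposing finitely many further relations shrinks $U$ to an open $V$ on which the group admits a nontrivial element $h_0$ lying in a normal subgroup containing none of the $\overline q$ --- for instance by mapping onto a free or small-cancellation quotient in which the images of $Q$ survive while $h_0$ generates a disjoint normal factor. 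On $V$ the element $h_0$ is nontrivial yet cannot equal any $\overline w$, so (b) fails and again $V\cap A_e=\emptyset$.

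The hard part is precisely this entangled case: one must show that ``$W_e$ is forced to avoid every trivial word of the neighbourhood $U$'' is incompatible, on an open set of markings, with ``$W_e$ represents every nontrivial element,'' by manufacturing with only finitely many constraints a nontrivial element outside the normal closure of the finite set $Q$. I expect to carry this out using explicit finite quotients together with $C'(1/6)$ (or graphical) quotients of $F_n$ to separate $h_0$ from the finitely many $\overline q$; the abundance of such quotients for $n\geq 2$ makes the separation available, but performing it uniformly against an arbitrary r.e.\ language is the delicate point. An essentially equivalent, category-free route is a direct diagonalisation: build a perfect binary tree of $C'(1/6)$ presentations, at stage $e$ planting a long trivial word into $W_e$ when it is rich enough to cover (and otherwise observing that it cannot cover), while reserving two independent relators on each pair of children so that the $2^{\aleph_0}$ limit groups are pairwise non-isomorphic. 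The same obstruction resurfaces there as the need to locate a plantable relator, and small-cancellation genericity is again the mechanism that resolves it.
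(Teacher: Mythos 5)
There is a genuine gap, and it sits exactly where you flagged it. Your whole argument rests on the claim that each $A_e$ is nowhere dense in $\mathcal{M}_n$, and that claim is false. The space $\mathcal{M}_n$ has isolated points that are epiRE: every finite group marked by $n$ generators is an isolated point of $\mathcal{M}_n$ (declare the finitely many multiplication-table relations trivial and one representative of each nontrivial element nontrivial; any kernel satisfying these finitely many conditions equals $N$), and every finite group is epiRegular, hence epiRE, by Anisimov's theorem. If $W_e$ is an r.e.\ language demonstrating epiRE for such a finite $F_n/N$, then $A_e$ contains the open singleton $\{N\}$, so $A_e$ is not nowhere dense and $\bigcup_e A_e$ is not meager. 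Concretely, your dichotomy breaks in the entangled case at $U=\{N\}$: no $w\in W_e$ can be planted (for $q\in Q$ with $q=_{F_n/N}w$ one has $qw^{-1}\in N\subseteq\nc{P\cup\{w\}}$, so $q\in\nc{P\cup\{w\}}$), and no open $V\subseteq U$ can defeat condition (b), because $W_e$ really does represent every nontrivial element of the unique group in $U$. So the step you deferred to small-cancellation genericity is not a deferrable technicality; it is false in general, and no amount of $C'(1/6)$ machinery can fix it in the full space of marked groups. (A smaller symptom of the same problem: ``comeager implies cardinality continuum'' also needs the space to be perfect, which $\mathcal{M}_n$ is not.) Any repair must restrict to a carefully chosen subspace, and even then nowhere-density needs a mechanism you do not supply: for instance, on the natural Cantor subspace given by Hall-type central quotients, what one can actually prove is that each $A_e$ meets it in an \emph{antichain} of kernels, and antichains in $2^{\N}$ under inclusion can be dense, so a category argument still does not close on its own.

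The paper avoids category altogether and uses cardinality plus a comparability obstruction. It builds (following Hall and de la Harpe) a single $2$-generated group $\Gamma$, solvable of derived length three, whose center contains a free abelian group with basis $\{u_i\mid i>0\}$; for $X\subseteq\N\setminus\{0\}$ the central subgroups $N_X\seteq\langle u_i\mid i\in X\rangle$ are normal, and one restricts to a chain of such $X$ of size continuum (Dedekind cuts of $\Q$). The key lemma is that a single language $\mathcal{L}$ over the fixed generators cannot demonstrate epiRE for two comparable quotients $Q_{X'}$ and $Q_X$ with $X'\subsetneq X$: picking $i\in X\setminus X'$, some $w\in\mathcal{L}$ represents the nontrivial image of $u_i$ in $Q_{X'}$, but then $w$ evaluates to the identity in $Q_X$, contradicting that $\mathcal{L}$ avoids the word problem of $Q_X$. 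Since there are only countably many Turing machines over the fixed alphabet, at most countably many members of the chain can be epiRE, and the isomorphism-counting step (which you do have, and which the paper isolates as its Lemma on quotients) converts this into continuum many isomorphism classes of non-epiRE groups. In short: where you need ``generic markings defeat every r.e.\ language,'' which is false, the paper needs only ``comparable quotients cannot share a witnessing language,'' which is an elementary and correct observation. Your opening calibration (recursively presented plus unsolvable word problem implies not epiRE) is correct and agrees with the paper's Theorem on word problems, but it cannot be amplified to the continuum by the route you propose.
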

Taking Theorem \ref{thm:uncountableNonRE} on board, one might also wonder how large (up to pairwise non-isomorphism) is the class epiRE, or with an even stricter restriction, how large is the class epiRegular?  One might expect that only finitely many, or even countably many, pairwise non-isomorphic groups can use the same regular language to demonstrate epiRegularity: the truth is far stranger.  Thus,   the following theorem stands counterpoint to Theorem \ref{thm:uncountableNonRE}.  See Subsections \ref{subsec:uncountableNonEpiRE}, and \ref{subsec:uncountableEpiRegular} for these theorems.

\begin{restatable}{thmLet}{thmUncountableEpiReg}\label{thm:unctbleEpiReg}
There are continuum many pairwise non-isomorphic epiRegular groups. 
\end{restatable}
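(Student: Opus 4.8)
The plan is to defeat the naive cardinality heuristic head-on. Over a fixed finite alphabet $X$ there are only countably many regular languages (one per finite automaton), and only countably many finite alphabets up to relabeling; so to produce continuum many pairwise non-isomorphic epiRegular groups one is forced to let a single pair $(X,L)$, with $L$ regular, witness epiRegularity for continuum many groups at once. By Theorem \ref{thm:PropGrps} the choice of generating set is immaterial, so I may fix one convenient monoid generating set $X$ throughout, and I will build a family $\{G_\omega\}_{\omega\in\{0,1\}^{\N}}$ of finitely generated groups, all generated by $X$, together with one regular language $L\subseteq X^{+}$, such that $\mathrm{eval}_{G_\omega}(L)=G_\omega\setminus\{1_{G_\omega}\}$ for every $\omega$. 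Two constraints are forced on such a family and guide the construction: first, the $G_\omega$ cannot be proper quotients of one common $X$-generated group (a quotient map would send some non-identity $L$-representative to the identity, so $L$ would then hit $1$), hence the family must be pairwise incomparable; second, since only countably many groups are recursively presented, almost all $G_\omega$ must be non-recursively presented, which is consistent with the remark following Theorem \ref{thm:SolvableWP} that epiRegularity gives no control on when elements are represented.

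The crux is that every word of $L$ must be \emph{universally non-trivial}, i.e.\ non-trivial in $G_\omega$ for all $\omega$ simultaneously (exactly the condition $L\cap\mathrm{eval}_{G_\omega}^{-1}(1)=\emptyset$ for every $\omega$), while still each $G_\omega\setminus\{1\}$ is covered. The mechanism I propose encodes $\omega$ in gluing data rather than in relations that trivialize words. Concretely, realise each $G_\omega$ as an (infinitely presented) iterated HNN / graph-of-groups construction over a fixed finitely generated vertex group $H$ with a fixed system of associated subgroups, where the attaching isomorphisms carry the bits of $\omega$. Because the associated subgroups are fixed, the Britton-style notion of a \emph{reduced} word depends only on $H$ and on these fixed subgroups and not on $\omega$; take $L$ to be the (regular, by design) set of non-empty reduced normal-form words. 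Britton's Lemma then certifies that every reduced word is non-trivial in $G_\omega$ for all $\omega$ at once, giving $L\cap\mathrm{eval}_{G_\omega}^{-1}(1)=\emptyset$ uniformly, while a standard reduction argument shows every element of $G_\omega$ has a reduced representative, so $L$ covers $G_\omega\setminus\{1\}$. This is precisely where a naive family such as a lamplighter $(\bigoplus\Z/m_n)\rtimes\Z$ fails: there the bits of $\omega$ are the lamp orders $m_n$, the word $t^{-n}a^{m_n}t^{n}$ is trivial exactly when $n$ is selected by $\omega$, and no fixed regular $L$ can dodge all these $\omega$-dependent trivial words while still covering every element.

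To see the family contains continuum many isomorphism types I would exhibit an isomorphism invariant from which $\omega$ can be recovered up to a countable-to-one ambiguity. Natural candidates are the isomorphism type of a characteristic subgroup or section on which the gluing data is visible — for instance the orders of distinguished torsion or commutator elements, or the set of finite quotients — arranged so that distinct $\omega$ (off a countable set of coincidences) yield non-isomorphic $G_\omega$. Since $\{0,1\}^{\N}$ has cardinality continuum and only countably many pairs collapse under any fixed countable-to-one relation, continuum many isomorphism types survive.

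The main obstacle is exactly the uniform control demanded of the single language $L$: it must avoid the identity in every $G_\omega$ (equivalently, every reduced word must be universally non-trivial) while simultaneously covering every $G_\omega\setminus\{1\}$, and these two demands pull in opposite directions as soon as the $G_\omega$ are genuinely different groups. The construction therefore hinges on isolating the $\omega$-dependence into data that a Britton-type lemma cannot detect, so that reducedness — hence $L$ — is $\omega$-independent and universal non-triviality is automatic. A secondary, more technical point is verifying that the reduced-word set really is regular for the chosen $H$ and associated subgroups, and that the same normal-form apparatus delivers a reduced representative for every element of every $G_\omega$; these are the essential checks that license the single regular language to witness epiRegularity across the whole continuum.
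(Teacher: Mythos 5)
Your preliminary analysis is correct, and it matches what the paper itself records as a corollary: since there are only countably many regular languages over finitely many alphabets (up to relabelling), any continuum-sized family of epiRegular groups must contain continuum many groups whose epiRegularity is witnessed by one and the same pair $(X,L)$, and the kernels of the corresponding evaluation maps must then be pairwise incomparable. Your guiding principle --- hide the parameter $\omega$ in gluing data that the normal-form machinery cannot detect --- is also the right one. But what you have written is a plan, not a proof, and the two claims it rests on are exactly the hard parts, and neither is established. First, the HNN mechanism has an internal cardinality obstruction: an HNN extension of a fixed finitely generated vertex group $H$ with finitely many stable letters and \emph{finitely generated} associated subgroups is determined by the images of finitely many generators under the attaching maps, so there are only countably many such groups. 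To reach continuum many groups you are forced either to use infinitely many stable letters (destroying finite generation, which the definition of epiC requires) or non-finitely-generated associated subgroups. In the latter case, Britton reducedness of a word requires recognising whether subwords represent elements of the associated subgroup $A$, and for the natural candidates (e.g.\ $H$ the lamplighter group and $A = \bigoplus_{n} C_2$ its lamp subgroup) this is a non-regular condition: it amounts to detecting zero exponent sum in a designated generator. So ``the (regular, by design) set of non-empty reduced normal-form words'' is not a secondary technical point; it is the crux, and as stated it is unsupported and, along the natural route, false. Second, you never produce the isomorphism invariant recovering $\omega$: ``natural candidates are \dots arranged so that'' is not an argument, and distinguishing HNN extensions with varying attaching maps up to abstract isomorphism is genuinely delicate.

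For comparison, the paper realizes your guiding principle with \emph{central} extensions instead of HNN extensions, and that choice makes everything you left open either trivial or citable. It takes explicit groups $G_I$, for $I \subseteq \{1,2,\dots\}$, generated by $a,z,t$ with relators forcing $z$ to be a central involution and $[t^nat^{-n},a]$ to equal $1$ or $z$ according to whether $n \in I$; by \cite{FFG} each $G_I$ is a central extension of $C_2$ by the lamplighter group and $G_I \cong G_J$ if and only if $I = J$, so pairwise non-isomorphism is quoted rather than reproved. EpiRegularity of each $G_I$ is then immediate from the closure of epiRegular under extensions (Theorem \ref{thm:closed_under_extenions}), since $C_2$ is finite and the lamplighter group has rational cross section \cite{bodartrational}. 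Moreover, because the kernel, the quotient, and their demonstrating languages $\mathcal{L}_1,\mathcal{L}_2$ are the same for every $I$, the language $\mathcal{L}_1 \cup \mathcal{L}_2 \cup \mathcal{L}_1\mathcal{L}_2$ produced by that theorem is literally one language working for all $G_I$ simultaneously: the $I$-dependence sits entirely in the $2$-cocycle, which the language never sees because non-triviality is always certified either in the fixed center or in the fixed quotient. If you want to salvage your write-up, replacing iterated HNN extensions by central extensions over a fixed epiRegular quotient, and invoking Theorem \ref{thm:closed_under_extenions}, is the way to do it.
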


\setcounter{section}{1}

\emph{Acknowledgements}  The authors wish to thank James Belk, Corentin Bodart, Carl-Fredrik Nyberg-Brodda, Gemma Crowe, Alex Levine, and Richard Thomas for various helpful comments and discussions around the contents of this article.

\section{Languages and groups: some formal definitions}
For C $\in \lclasses$, to describe the class epiC of groups we need a few definitions.

Let $X$ be a set.  For positive natural $k$ set $X^k$ to be the set of all $k$-tuples with coordinates from $X$.  We set 
    \[X^+ \seteq \bigcup_{k=1}^{\infty}X^k,\]
and \[X^* \seteq X^+\cup\{\varepsilon\}.\]

We view a $k$-tuple $(x_1,x_2, \ldots, x_k)$ as the string $x_1 x_2 \cdots x_k$ and $\varepsilon$ as the empty string (the empty tuple). Allowing concatenation of strings, \(X^*\) becomes \emph{the free monoid generated by \(X\)} with identity element \(\varepsilon\), as is standard.    By the phrase \emph{a formal language over alphabet $X$} we mean any subset $L\subseteq X^*$.  In this article we will only consider formal languages over finite alphabets.

Throughout, we shall never need to explore the grammars of these language classes. However, we will need to recall that languages are defined by computing machines that accept them, which we call \emph{acceptor machines}. We shall write $\lA = L$ to denote that $L$ is the language accepted by an acceptor machine $\mathcal{A}$. Now recall that a language $\lA$ is in the class C if it is  accepted by an acceptor machine $\mathcal{A}$ where C is 
\begin{itemize}
    \item \emph{Regular} if $\mathcal{A}$ is a finite state automaton (FSA),
    \item \emph{Context-free} if $\mathcal{A}$ is a pushdown automaton (PDA),
    \item \emph{Context Sensitive} if $\mathcal{A}$ is a linearly bounded automaton (LBA), or
    \item \emph{Recursively Enumberable} if $\mathcal{A}$ is a Turing machine (TM).
\end{itemize}
For more details regarding these types of acceptor machines or the language classes they accept, we refer the reader to \cite{HopcroftUllman}.

\begin{defin}[Word and coword problem]
    If \(G\) is a group and \(X\) is a monoid generating set for \(G\), then we define the \textit{word problem} and \textit{coword problem} of \(G\) with respect to \(X\) by
    \[WP(G, X):= \makeset{w\in X^*}{\(w =_G 1_G\)},\] and
    \[coWP(G, X):= X^* \backslash WP(G, X)\]
respectively.  
\end{defin}
\begin{remark} Note that if $w\in X^*$ 
 we will write $w=x_1x_2\ldots x_j$ if $w$ is precisely the string $x_1x_2\ldots x_j$, while we write $w =_G g$ (for $g \in G$) to denote that the product described by the string $w$ evaluates to $g$ in $G$ (here we are assuming that $X$ is a subset of $G$).  We extend this notation to its symmetric transitive closure so we may compare products of elements of $G$, taken as strings, to determine when such expressions evaluate to the same element of $G$ under the group product. 

 Note further that we are denoting the identity of $G$ by $1_G$, and that we may refer to elements of $WP(G,X)$ as \emph{words} and elements of $coWP(G,X)$ as \emph{cowords} when the context seems clear.
 \end{remark}
 

We are now ready to formally define the classes epiC of groups which we examine in this article. 

For a given language class C, the class of groups epiC is those groups $G$ with a monoid generating set $X$ so that the set of non-identity elements in $G$ is the image of a language $\mathcal{L}\subseteq X^*$ in C under the natural evaluation map. In more detail:

\begin{defin}
Let $G$ be a finitely generated group and \(X\subseteq G\) be finite with \(X\) generating \(G\) as a monoid. Let C be a language class from \(\lclasses\) and let \(L \subseteq X^*\) be a language over the alphabet \(X\) which is in the class C.

We say \(L\) \emph{demonstrates epiC} for \(G\) if:
\begin{itemize}
    \item $L \cap WP(G,X) = \varnothing$, and
    \item for every non-identity element $h\in G$, there exists $w_h \in coWP(G,X)$ such that $w_h \in L$ and $w_h =_G h.$
\end{itemize}
If \(\mathcal{A}\) is an acceptor machine of the appropriate type to accept a language in the class C (i.e., a finite state automaton for Regular, a pushdown automaton for CF, etc.), then we say that \(\mathcal{A}\) demonstrates epiC for \(G\) if \(\lA\) demonstrates epiC for \(G\). 

The class \emph{epiC} is the class of all groups \(H\) such that \(H\) admits a finite monoid generating set $Y$ and a language \(\mathcal{K}\subseteq Y^*\) which demonstrates epiC for \(H\).
\end{defin}

\begin{rem}
Let $G$ be a group and let C be a language class from \(\lclasses\) and suppose further that $X\subseteq G$ is a finite set which generates $G$ as a monoid.  Extending the previous definition we also say \emph{\(G\) is epiC with respect to $X$} if there is a language $\mathcal{L}\subseteq X^*$ which demonstrates epiC for $G$.  We may also say \emph{$G$ is epiC} in this case.
\end{rem}

Below are some examples of epiRegular groups. Finite groups represent an extreme case of epiRegular groups, and hence are in epiC for any of our language classes. Here are two arguments (demonstrating the flexibility of the definition of epiRegularity). 
 
 Firstly, by Anisimov's theorem \cite{Anisimov}, there exists a finite state automaton accepting exactly the entirety of the coword problem for any given set of non-trivial generators. One can also use the non-trivial elements of the group as the generators, and thus build an automaton that accepts exactly the length one words over this alphabet.
 
The group $\mathbb{Z}$ generated by the symmetric generating set \(\{a, a^{-1}\}\) is epiRegular with $\{a^{n} \mid n>0\}\cup \{(a^{-1})^{n} \mid n>0\}$ demonstrating epiRegularity. 

\section{Independence of generating sets.}

In this section we show our first claimed theorem from Subsection \ref{subsec:Results}. 
 Specifically, that being in a class epiC does not depend on the choice of (finite) generating set.

\thmPropGrps*
\begin{proof}
It is a standard result (see \cite{sipser,HopcroftUllman})  that for each language class C amongst Regular, CF, CS, and RE, the class C is closed under monoid homomorphisms which do not map letters to the empty word.  Specifically, if $X$ and $Y$ are finite sets, \(\mathcal{L}\subseteq X^*\) with \(\mathcal{L}\) in the class C, and where $\phi_*:X^*\to Y^*$ is the unique monoid homomorphism extending a function $\phi:X\to Y^+$, then the set \(\mathcal{L}\phi_*:=\{v\in Y^* \mid \exists u\in \mathcal{L}, v=u\phi_*\}\) is also a language in the class C.

Let C be a chosen language class amongst Regular, CF, CS, or RE.  Let \(G\) be a  non-trivial group with finite monoid generating set \(X\) and let \(\mathcal{L}\subseteq X^*\) which demonstrates \(G\) is epiC with respect to \(X\) (note that the trivial group is clearly epiC regardless of the generating set).  Finally, let \(Y\) be some finite monoid generating set for \(G\).

For each \(x\in X\) there is \(w_x\in Y^*\) so that \(w_x=_G x\) and as \(G\) is non-trivial we can further choose \(w_x\neq \varepsilon\). 
Let \(\phi:X\to Y^*\) be the function defined by the rules that for all \(x\in X\), we have \(x\mapsto w_x\).  Note that for all \(u\in \mathcal{L}\) we have \(u\phi_*\) has the property that $u=_G u\phi_*$.

The map \(\phi_*\) now takes \(\mathcal{L}\) to a new language \(\mathcal{K}:=\mathcal{L}\phi_*\subseteq Y^*\) which is still in the language class C and which demonstrates that \(G\) is epiC with respect to \(Y\).
\end{proof}

\begin{remark}
    As seen in the proof above, we only rely on the property of the language classes being closed under monoid homomorphisms that do not map letters to the empty word. Thus for any language class C with that property, a group $G$ is epiC with respect to one finite monoid generating set $X$ if and only if $G$ is epiC with respect to another finite monoid generating set $Y$. 

    So one may wish to explore the class of epiC groups for a language class C with the above mentioned property even if C is not in the Chomsky Hierarchy. 
\end{remark}

\section{Closure Properties}

In this section we show that each class epiC is closed under finite index overgroups, extensions and graph products. We start with finite index overgroups. 
\subsection{Finite index overgroups}
Our result is as follows. This is one half of Theorem \ref{thm:ClosureFI}.
\begin{thm} \label{thm:FIovergroups}
The class epiC is closed under finite index overgroups.    
\end{thm}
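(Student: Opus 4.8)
The plan is to manufacture, from a language demonstrating epiC for $G$, a language demonstrating epiC for $H$ by appending coset representatives. Suppose $G$ is epiC and $G\leq H$ with $[H:G]=n<\infty$. Fix a right transversal $t_1=1_H, t_2,\ldots,t_n$, so that $H=\bigsqcup_{i=1}^n G t_i$ and $Gt_1=G$ is the unique coset containing $1_H$. By Theorem~\ref{thm:PropGrps} I may choose any convenient finite monoid generating set $X$ for $G$; let $L\subseteq X^+$ be a language in class C demonstrating epiC for $G$, so $L$ has image $G\setminus\{1_G\}$ under evaluation and $L\cap WP(G,X)=\emptyset$ (in particular $\varepsilon\notin L$). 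Setting $T'=\{t_2,\ldots,t_n\}$ and $Y=X\cup T'$, the set $Y$ is a finite monoid generating set for $H$: indeed $X$ generates $G$ as a monoid and every $h\in H$ has the form $g t_i$ with $g\in G$.

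The key observation concerns which products remain to be reached. Inside the trivial coset $Gt_1=G$ the nontrivial elements are exactly $G\setminus\{1_G\}$, which $L$ already represents. Inside a nontrivial coset $Gt_i$ with $i\geq 2$, however, every element $g t_i$ is nontrivial in $H$, so I must represent all of $Gt_i$, including the point $1_G\cdot t_i=t_i$ that the epiC language for $G$ deliberately omits. This is the one genuine subtlety, and it is handled by supplying the letter $t_i$ by hand. Concretely I would take
\[ K \;=\; L \;\cup\; (L\cdot T') \;\cup\; T' \;\subseteq\; Y^+, \]
where $L\cdot T'=\{\,w t_i : w\in L,\ 2\leq i\leq n\,\}$.

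It then remains to check the three required properties. For membership in C: each class among Regular, CF, CS, RE is closed under union and under concatenation with a finite language (standard, see \cite{HopcroftUllman}), so $K$ lies in C; note $K$ contains no empty word, which keeps the argument valid for CS. For $K\cap WP(H,Y)=\emptyset$: words of $L$ evaluate into $G\setminus\{1_G\}$, words $w t_i$ evaluate to $g t_i$ with $g\neq 1_G$ hence into $Gt_i$, and each $t_i$ with $i\geq 2$ evaluates to $t_i$; all of these lie in $H\setminus\{1_H\}$ since $1_H$ sits only in $Gt_1$. For surjectivity onto $H\setminus\{1_H\}$: the image of $L$ is $G\setminus\{1_G\}$, while the images of $L\cdot T'$ and $T'$ together give $\bigcup_{i=2}^n Gt_i$, and $\left(G\setminus\{1_G\}\right)\cup\bigcup_{i=2}^n Gt_i=H\setminus\{1_H\}$. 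Thus $K$ demonstrates epiC for $H$.

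The only real obstacle here is conceptual rather than computational: one must recognize that passing to an overgroup forces coverage of the previously excluded identity of $G$ inside each nontrivial coset, which the bare epiC language does not provide. Once that gap is closed by the finite correction $T'$, everything else reduces to elementary closure of the language classes under union and concatenation with finite sets. For completeness I would dispose of the degenerate case separately: if $G$ is trivial then $H$ is finite and is epiC for every class C by the finite-group examples above, so I may assume $G$ is nontrivial, which guarantees the nonempty representative words needed exactly as in the proof of Theorem~\ref{thm:PropGrps}.
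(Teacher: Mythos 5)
Your proposal is correct and is essentially the paper's own proof: the paper likewise takes a transversal $T$ containing $1_H$, generates $H$ by $X\cup(T\setminus\{1_H\})$, and uses the language $\mathcal{L}_G\cup(T\setminus\{1_H\})\cup\mathcal{L}_G(T\setminus\{1_H\})$, which is exactly your $L\cup T'\cup L\cdot T'$, justified by closure of C under finite unions and concatenation. Your extra remarks (invoking Theorem~\ref{thm:PropGrps}, the empty-word point for CS, and the trivial-$G$ case) only add care the paper's version leaves implicit.
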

\begin{proof}
    
    For any language class C in $\{Regular, CF, RE\}$, suppose the group $G$ is epiC with respect to $X$ (with $\mathcal{L_G} \subseteq X^*$ demonstrating epiC for G) and $H$ is a finite index overgroup of $G$. The transversal set $T$ together with $X$ generates $H$. We may assume that $1_H \in T$, so $(T \setminus \{1_H\}) \cup X$ is a finite generating set for H. Observe that any element $h \in H$ can be written as $gt$ for some $g \in G$ and $t \in T$, with $1_H = 1_G 1_H$. Thus any non-trivial element $h$ of $H$ is either a non-trivial element of $G$, or is in a coset of $G$ with a non-trivial coset representative $t \neq 1_H$. In the latter case, $h$ may be $t$ itself and so can be represented as $1_G t$, otherwise $h$ can be represented as $g t$ for non-trivial $g$. Thus $\mathcal{L_G} \cup (T \setminus \{1_H\}) \cup \mathcal{L_G} (T \setminus \{1_H\})$ demonstrates epiC for $H$ if C is known to be closed under finite unions and concatenation. 
    
    
\end{proof}
\subsection{Extensions}
This result forms one half of Theorem \ref{thm:ClosurePropGroups}.
\begin{thm}\label{thm:closed_under_extenions}
Suppose C\,\,\(\in\{Regular, CF, CS, RE\}\) and
let $N$ and $Q$ be epiC. If $E$ is an extension of $N$ by $Q$ (so that $N$ is normal in $E$) then $E$ is epiC.
\end{thm}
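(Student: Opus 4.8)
The plan is to build a generating set and a language for $E$ by ``stacking'' the data for $N$ on top of the data for $Q$ through a choice of coset representatives, exploiting that each of the four classes is closed under finite union and concatenation. Fix finite monoid generating sets $X_N$ for $N$ and $X_Q$ for $Q$, together with languages $L_N\subseteq X_N^*$ and $L_Q\subseteq X_Q^*$ demonstrating epiC for $N$ and $Q$ respectively; these exist by hypothesis, and by Theorem~\ref{thm:PropGrps} I am free to pick whichever generating sets are convenient. Let $\pi\colon E\to Q$ be the quotient map with kernel $N$, and for each $x\in X_Q$ choose a lift $\tilde{x}\in E$ with $\pi(\tilde{x})=x$; write $\tilde{X}_Q=\{\tilde{x}\mid x\in X_Q\}$, regarded as a fresh alphabet disjoint from $X_N$.

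The first step is to check that $X_E\seteq X_N\cup\tilde{X}_Q$ generates $E$ as a monoid. Given $e\in E$, set $q=\pi(e)$ and write $q$ as a word $x_1\cdots x_k$ over $X_Q$; then the lifted product $m\seteq\tilde{x}_1\cdots\tilde{x}_k$ lies in the submonoid generated by $\tilde{X}_Q$ and satisfies $\pi(m)=q$, so $em^{-1}\in N$. Since $X_N$ generates $N$ as a monoid, $em^{-1}$ is a word over $X_N$, and hence $e=(em^{-1})\,m$ is a word over $X_E$, as required.

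Next I would define the candidate language. Let $\tilde{L}_Q\subseteq\tilde{X}_Q^*$ be the relabelling of $L_Q$ under $x\mapsto\tilde{x}$, a letter-to-letter renaming and hence still in class C, and set
\[
L_E\seteq L_N\;\cup\;\tilde{L}_Q\;\cup\;L_N\tilde{L}_Q\subseteq X_E^*.
\]
Because each class C among Regular, CF, CS, RE is closed under finite union and concatenation, $L_E$ lies in C. It then remains to verify the two conditions of the definition. For the ``no identity'' condition, each piece projects under $\pi$ to a value dictated by $L_N$ or $L_Q$: words from $L_N$ evaluate into $N\setminus\{1_E\}$, while words from $\tilde{L}_Q$ and from $L_N\tilde{L}_Q$ project to a non-identity element of $Q$ (since $L_Q$ avoids $WP(Q,X_Q)$) and so cannot evaluate to $1_E$.

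For the surjectivity-onto-non-identity condition I would argue coset by coset. A non-identity $e\in E$ with $\pi(e)=1_Q$ lies in $N\setminus\{1_N\}$ and is represented by a word of $L_N$. If instead $q\seteq\pi(e)\neq 1_Q$, pick $w_Q\in L_Q$ with $w_Q=_Q q$, let $\tilde{w}_Q\in\tilde{L}_Q$ be its relabelling and $e_q$ its evaluation in $E$, and note $n\seteq e\,e_q^{-1}\in N$; if $n=1_E$ then $e=e_q$ is represented by $\tilde{w}_Q$, and otherwise $n$ is represented by some $w_N\in L_N$ and $e$ by $w_N\tilde{w}_Q\in L_N\tilde{L}_Q$. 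The one point to take care over --- and the reason the middle summand $\tilde{L}_Q$ is present rather than merely $L_N\tilde{L}_Q$ --- is exactly this boundary case $e=e_q$: the product piece only yields elements $n e_q$ with $n$ non-trivial, so it misses the distinguished representative $e_q$ of each non-trivial coset, which $\tilde{L}_Q$ must supply. I expect this bookkeeping across the trivial and non-trivial cosets, rather than any genuine difficulty with the language classes themselves, to be the main thing to get right.
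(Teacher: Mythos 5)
Your proposal is correct and follows essentially the same route as the paper: both use the language $L_N\cup \tilde{L}_Q\cup L_N\tilde{L}_Q$ over a generating set formed from generators of $N$ together with lifts of generators of $Q$, relying on closure of each class under finite unions and concatenation, and both note that words involving the $Q$-part project to a non-identity element of $Q$ and hence avoid $1_E$. Your write-up merely makes explicit some details the paper leaves implicit (monoid generation of $E$, the relabelling of $L_Q$, and the boundary case $e=e_q$ that forces the middle summand).
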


\begin{proof}
Suppose without meaningful loss of generality that \(Q=E/N\). 
Let $X$ be a finite monoid generating set for $N$ and $Y$ be a finite subset of \(E\) such that \(\{Ny: y\in Y\}\) is a monoid generating set for \(Q\) (and these elements are distinct). 
Note that \(X\cup Y\) is a generating set for \(E\).

Every element of $E$ can be written as a product $nq$ for some $n \in N$ and $q \in \langle Y\rangle$. Therefore elements of $E$ are either elements of $N,$ elements of \(\langle Y\rangle \backslash N\) or a product of both.

Let $\mathcal{L}_1$ be the language demonstrating $N$ is epiC over the alphabet $X$.
Let $\mathcal{L}_2$ be the language demonstrating $Q$ is epiC over the alphabet $Y$ (viewing the elements of $Y$ as their image in the quotient).
It is known (see for example \cite{HopcroftUllman}) that C is closed under concatenation and finite unions. Thus we see that $\mathcal{L}':= \mathcal{L}_1 \cup \mathcal{L}_2 \cup \mathcal{L}_1\mathcal{L}_2$ is a language demonstrating $E$ is epiC  (note that every element represented by \(\mathcal{L}_1\mathcal{L}_2\) has a non-trivial image in \(Q\)). 
\end{proof}

\begin{remark}
The key ingredients in the proofs of Theorems \ref{thm:FIovergroups} \& \ref{thm:closed_under_extenions} are that the language classes are closed under concatenations and finite unions. Thus, this result holds for other language classes as long as those properties are satisfied.
\end{remark}

\begin{cor}
    Finitely generated polycyclic groups are epiRegular, and hence so are finitely generated nilpotent groups.
\end{cor}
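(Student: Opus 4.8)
The plan is to exploit the fact that a finitely generated polycyclic group is assembled from cyclic groups by iterated extensions, and then to invoke closure of epiRegular under extensions (Theorem~\ref{thm:closed_under_extenions}).

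First I would recall that a polycyclic group $G$ admits a subnormal series
\[
1 = G_0 \triangleleft G_1 \triangleleft \cdots \triangleleft G_n = G
\]
in which each factor $G_{i+1}/G_i$ is cyclic. The base cases for the induction are precisely the cyclic groups: the infinite cyclic group $\Z$ is epiRegular by the example given earlier (generated by $\{a,a^{-1}\}$ with the language $\{a^{n}\mid n>0\}\cup\{(a^{-1})^{n}\mid n>0\}$ demonstrating epiRegularity), while every finite cyclic group is epiRegular because all finite groups are, by the arguments following the definition.

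Next I would induct along the series. The term $G_1=G_1/G_0$ is cyclic, hence epiRegular. Assuming $G_i$ is epiRegular, the group $G_{i+1}$ is an extension of the normal subgroup $G_i$ by the cyclic (and thus epiRegular) quotient $G_{i+1}/G_i$, so Theorem~\ref{thm:closed_under_extenions} applied with the Regular language class shows $G_{i+1}$ is epiRegular. After $n$ steps we conclude that $G=G_n$ is epiRegular.

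The only point requiring care is that the hypotheses of Theorem~\ref{thm:closed_under_extenions} are met at each stage: that theorem demands that both the normal subgroup and the quotient be epiC, and in particular finitely generated. This is automatic here, since each $G_i$ is a subgroup of a polycyclic group and is therefore itself polycyclic and finitely generated, while the cyclic quotients are finitely generated by construction. Finally, for the nilpotent statement I would simply invoke the standard fact that every finitely generated nilpotent group is polycyclic, whence the conclusion transfers immediately. I do not expect a genuine obstacle: once the closure-under-extensions machinery is in hand, the argument is a routine induction, and the sole subtlety is confirming finite generation of each term in the series so that Theorem~\ref{thm:closed_under_extenions} applies.
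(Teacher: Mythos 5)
Your proposal is correct and is exactly the argument the paper intends: the corollary is stated immediately after Theorem~\ref{thm:closed_under_extenions} precisely because a polycyclic group is an iterated extension of cyclic groups (each of which is epiRegular, as shown in the paper's examples), and finitely generated nilpotent groups are polycyclic. Your added care about finite generation of each term in the series is a valid and worthwhile check, but it raises no genuine obstacle, just as you anticipated.
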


\begin{thm}\label{thm:closed_under_finite_index_overgroups}
     Suppose C\,\,\(\in\{Regular, CF, CS, RE\}\) and
let $H$ be epiC. If $G$ is a finite index overgroup of $H$ then $G$ is epiC. 
\end{thm}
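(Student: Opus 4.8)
My plan is to recognise that this is the finite-index-overgroup half of Theorem~\ref{thm:ClosureFI}, and that the cases in which C is Regular, CF, or RE are already supplied by Theorem~\ref{thm:FIovergroups}; the only genuinely new case is the context-sensitive one, for which I would re-run the construction from that proof and check that it survives. Concretely, let $X$ be a finite monoid generating set for $H$ with $\mathcal{L}_H\subseteq X^*$ demonstrating epiC for $H$. Using that $H$ has finite index in $G$, I would fix a finite right transversal $T$ of $H$ in $G$ with $1_G\in T$, so that each $g\in G$ is uniquely of the form $ht$ with $h\in H$ and $t\in T$, and so that $X\cup(T\setminus\{1_G\})$ is a finite monoid generating set for $G$. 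I would then propose the language $\mathcal{L}_H\cup(T\setminus\{1_G\})\cup\mathcal{L}_H(T\setminus\{1_G\})\subseteq\bigl(X\cup(T\setminus\{1_G\})\bigr)^*$ as a demonstration of epiC for $G$.

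The key step is to verify that this language surjects onto $G\setminus\{1_G\}$ while avoiding the word problem. Surjectivity follows by classifying a non-trivial $g=ht$ according to the two conditions $t=1_G$ and $h=1_H$: the elements of $H\setminus\{1_H\}$ are supplied by $\mathcal{L}_H$, the non-trivial coset representatives $t$ are supplied by the single letters of $T\setminus\{1_G\}$, and every remaining $ht$ with $h\neq 1_H$ and $t\neq 1_G$ is supplied by the concatenation. The one point that needs an argument rather than bookkeeping is that no string evaluates to $1_G$: for a product $wt$ with $w\in\mathcal{L}_H$ and $t\in T\setminus\{1_G\}$, the equality $wt=_G 1_G$ would force $t=_G w^{-1}\in H$, contradicting that $t$ represents a non-trivial coset of $H$; the other two pieces are disjoint from the word problem for the same reasons as in Theorem~\ref{thm:FIovergroups}.

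What remains is the language-theoretic closure, which is exactly the ingredient flagged in the remark following Theorem~\ref{thm:closed_under_extenions}: the construction uses only finite unions and concatenation with a finite language. Since the context-sensitive languages are closed under both of these operations (see \cite{HopcroftUllman}), the identical construction demonstrates epiCS for $G$, completing all four cases. I do not anticipate a serious obstacle; the only subtlety is that CS, unlike the other three classes, is not a full AFL, so one must confirm the specific closure under finite union and concatenation (which does hold) rather than invoke a blanket AFL property. One could alternatively route through the normal core of $H$ in $G$ together with the extension result Theorem~\ref{thm:closed_under_extenions}, but that would depend on closure under passage to finite-index subgroups, so I regard the direct transversal construction as the cleaner path.
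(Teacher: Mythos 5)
Your proposal is correct and follows essentially the same route as the paper: the paper handles this statement via the proof of Theorem \ref{thm:FIovergroups}, using exactly your transversal construction $\mathcal{L}_H \cup (T\setminus\{1\}) \cup \mathcal{L}_H(T\setminus\{1\})$ together with closure of C under finite unions and concatenation. If anything, your version is slightly more careful than the paper's, since the paper's proof text restricts itself to $\{Regular, CF, RE\}$ and leaves implicit both the CS case and the check that strings of the form $wt$ cannot evaluate to the identity, both of which you spell out.
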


\subsection{Finite index subgroups}
Below we prove the second half of Theorem \ref{thm:ClosureFI}.
We learnt the following proof from Corentin Bodart, who generously allowed us to use it. The ideas in this proof rely on work of Benson \cite{Benson} and Bishop \cite{BishopFIsubgroup}. 
\begin{thm}
    Suppose C\,\,\(\in\{Regular, CF, CS, RE\}\). Let $G$ be finitely generated group with finite inverse closed generating set $X$. Suppose $H$ is a finite index subgroup. If $G$ is epiC then $H$ is also epiC.
\end{thm}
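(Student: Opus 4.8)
The plan is to combine Benson's coset (permutation) automaton with a length-preserving Reidemeister--Schreier rewriting in the spirit of Bishop, arranging that every step preserves membership in C; the context-sensitive case is the delicate one. First I fix a right transversal $T$ of $H$ in $G$ with $1_G\in T$, writing $\overline{g}\in T$ for the representative of $Hg$, and for $t\in T,\ x\in X$ I set $\gamma(t,x):=tx\,\overline{tx}^{\,-1}\in H$. Because $X$ is inverse-closed the family $\{\gamma(t,x)\}$ is inverse-closed, so the non-trivial $\gamma(t,x)$ generate $H$ as a monoid; let $Y$ consist of these non-trivial Schreier generators together with one extra symbol $e$ declared to evaluate to $1_H$. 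The redundant letter $e$ is exactly what will keep the rewriting length-preserving.

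The right action of $G$ on its $[G:H]$ cosets yields a monoid homomorphism from $X^*$ into the symmetric group on the cosets, realised by the finite automaton whose states are the cosets, whose start state and unique accept state is $H$, and whose edges are $Hg\xrightarrow{\,x\,}Hgx$. This automaton accepts exactly $R:=\{\,w\in X^*: w=_G h\text{ for some }h\in H\,\}$, so $R$ is regular. Since each class C in the list is closed under intersection with regular languages, $L_G\cap R$ lies in C, and it already ``demonstrates epiC for $H$ over $X$'': a word of $L_G$ is non-trivial in $G$, and lying in $R$ it is a non-trivial element of $H$; conversely every non-trivial $h\in H$ is non-trivial in $G$, hence equals some $w_h\in L_G$, and $w_h\in R$ because $h\in H$.

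Now define the sequential transducer $\psi$ that reads $w=x_1\cdots x_k\in R$, maintains the current representative $t_{i-1}$ (with $t_0=1_G$ and $t_i=\overline{t_{i-1}x_i}$, so $t_i=\overline{x_1\cdots x_i}$), and on $x_i$ outputs $\gamma(t_{i-1},x_i)$ when this is non-trivial and outputs $e$ otherwise. Then $\length{\psi(w)}=\length{w}$, and $\psi(w)=_H w$: the telescoping product $\gamma(t_0,x_1)\cdots\gamma(t_{k-1},x_k)$ equals $t_0\,(x_1\cdots x_k)\,t_k^{-1}=w$ in $G$ (using $t_0=1_G$ and $t_k=\overline{w}=1_G$ as $w\in R$), while the $e$'s evaluate to $1_H$. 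Hence $\mathcal{K}:=\psi(L_G\cap R)\subseteq Y^*$ hits every non-trivial $h$ via $\psi(w_h)$, and every word of $\mathcal{K}$ equals in $H$ the corresponding non-trivial element, so $\mathcal{K}\cap WP(H,Y)=\varnothing$; thus $\mathcal{K}$ demonstrates epiC for $H$ provided $\mathcal{K}\in C$. To see the latter, form $\Delta=X\times Y$ with the length-preserving projections $\pi_1:\Delta^*\to X^*$ and $\pi_2:\Delta^*\to Y^*$, and let $S\subseteq\Delta^*$ be the (regular, since $\psi$ is finite-state and length-preserving) language of aligned input/output pairs realised by $\psi$. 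Then
\[
\mathcal{K}=\pi_2\!\left(\pi_1^{-1}(L_G\cap R)\cap S\right).
\]
For C in $\{Regular, CF, RE\}$ each operation is a standard closure; for CS one uses closure under inverse homomorphism (here $\pi_1$ is length-preserving, so an LBA verifies $\pi_1^{-1}$ in linear space), under intersection with regular languages, and under non-erasing homomorphism ($\pi_2$ is length-preserving, hence non-erasing). So $\mathcal{K}\in C$ in every case and $H$ is epiC.

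The step I expect to be the main obstacle is precisely the final one for CS: the naive Reidemeister--Schreier rewriting deletes the tree-edge (trivial) generators and is therefore erasing, and context-sensitive languages are \emph{not} closed under erasing homomorphisms. Recording the trivial generators as the redundant letter $e$ rather than deleting them converts $\psi$ into a length-preserving transduction, and the product-alphabet factorisation above then keeps us inside CS while remaining valid, uniformly, for all four classes.
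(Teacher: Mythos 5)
Your proposal is correct and is essentially the paper's own argument (due to Bodart): both proofs rewrite $L_G$ through the Schreier coset graph, using closure under inverse letter-to-letter homomorphism, intersection with a regular language (your run language $S$ plays exactly the role of the paper's regular language of paths from $H$ to $H$ in the coset graph), and a final non-erasing homomorphism onto Schreier generators. Your padding letter $e$ is precisely the paper's device of letting the final homomorphism send letters to the identity element of $H$ (but never to the empty word), which is the same resolution of the context-sensitive obstruction you identify.
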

\begin{proof}
     In this proof we will use \(H\backslash G\) to represent the set \(\makeset{Hg}{\(g\in G\)}\) of right cosets of \(H\) in \(G\) (not to be confused with set-minus). For all \(C\in H\backslash G\) let \(t_C\in C\) be fixed (and choose \(t_H=1\)). Let \(T=\makeset{t_C}{\(C\in H\backslash G\)}\).

     We define a labeled directed graph \(D_H\) with vertex set \(H\backslash G\) and edge set 
     \[E_H:=\makeset{(C_1,x, C_2)}{\(x\in X\) and \(C_1,C_2\in H\backslash G\) satisfy \(C_1x=C_2\)}\]
     (where the edge \((C_1,x, C_2)\) goes from \(C_1\) to \(C_2\) and has label \(x\)).

    Let \(L\subseteq X^*\) be a language in \(C\) demonstrating that \(G\) is epiC.

By a (non-trivial) path in \(D_H\), we mean a sequence of (at least 1) edges of \(D_H\) such that terminal vertex of one edge is the start of the next edge in the sequence. It is well known that the set of non-trivial paths in a finite digraph from one vertex to another is always regular.

Consider the homomorphism \(h: E_H^* \to X^*\) defined by mapping each edge to its label. It follows that the set \(P_L\) of non-trivial paths \(p\) in \(D_H\) from the vertex \(H\) to itself with \((p)h\in L\) belongs to the class \(C\) (as \(C\) is closed under inverse homomorphism and intersection with regular languages).
Note that every element of \(L\) is the image of some path (starting at \(H\)) in \(D_H\) under \(h\), in particular the elements of \(L\) which represent elements of \(H\) are precisely those in the set \((P_L)h\). 

We define $k: E_H \to G$ to be the following map \[(t_{C_1}, x, t_{C_2}) \to t_{C_1} x t_{C_2}^{-1}.\] Let $X_H$ be the image of $E_H$ under $k$. Note that from the definition of \(E_H\), we have that \(X_H\subseteq H\). Moreover if \(x_0x_1\ldots x_{k-1}\in X^*\), then there is a unique path \(e_0e_1\ldots e_{k-1}\) in \(D_H\) with labeled by \(x_0x_1\ldots x_{k-1}\) which starts at the vertex \(H\). 
Moreover \((e_0)k(e_1)k\ldots (e_{k-1})k=_G x_0x_1\ldots x_{k-1} t_{Hx_0x_1\ldots x_{k-1}}^{-1}\).

In particular, the induced map \(k:E_H^*\to X_H^{*}\) maps a non-trivial path in \(D_H\) starting at \(H\) to the same point as \(h\) whenever the path also ends at the vertex \(H\).

Thus for all \(w\in P_L\), we have \((w)k=_G (w)h\). So the set of elements of \(H\) represented by words in \((P_L)k\) is the same as the set of of elements of \(H\) represented by words in \((P_L)h\) (i.e. all the non-identity elements). This in particular implies that \(X_H\) does generate \(H\).

As the language \((P_L)k\) is in the class C (note while \(k\) is a homomorphism which may map a letter to the identity it does not map a letter to the empty word), is over a generating set for \(H\), and has the correct set of evaluations, the result follows.

\end{proof}
\subsection{Graph products of groups}
We now work to show the remaining part of Theorem \ref{thm:ClosurePropGroups}:  for each language class C, the class of epiC groups is closed under graph products. 

We introduce the definition of graph products of groups as in \cite{GraphProductdef}. 

\begin{defin}\label{graphproductdef}\label{def:graphProd}
Let $\Gamma = (V(\Gamma), E(\Gamma))$ be a finite simple graph and associate to each vertex $v\in V(\Gamma)$ a group $G_v$ with a fixed group presentation $G_v\seteq \langle X_v\mid R_v\rangle$, where we further suppose the groups $G_v$ are pairwise disjoint as sets.  We define  \emph{the graph product of groups associated with the structure $(\Gamma,\{\langle X_v\mid R_v\rangle \mid v\in V(\Gamma)\})$} as the group $G_\Gamma$ given by the presentation
\[\langle X \mid R\rangle,\]
where $X = \bigsqcup_{v \in V(\Gamma)} X_v$, and $R \subseteq X^*$ is the set of relations  \[R = \left(\bigsqcup_{v \in V(\Gamma)} R_v\right) \bigsqcup \,\{[x_v, x_w] \mid x_v \in X_v, x_w \in X_w, \{v,w\} \in E(\Gamma)\}.\] 
\end{defin}

In what follows, we will only care about graph products of groups associated to structures $(\Gamma,\{\langle X_v\mid R_v\rangle\mid v\in V(\Gamma)\})$ where each set $X_v$ is a finite monoid generating set for the corresponding vertex group $G_v$.  Note in this case that the associated group $G_\Gamma$ is finitely generated as a monoid by $X$. 
 We will be less explicit in discussing the structure $(\Gamma,\{\langle X_v\mid R_v\rangle\mid v\in V(\Gamma)\})$ below and simply refer to a graph product of groups $G_\Gamma$ associated to a graph $\Gamma$.

In particular, the graph product of groups associated to the structure $(\Gamma,\{\langle X_v\mid R_v\rangle\mid v\in V(\Gamma)\})$ is generated by all the groups $G_v$ with the relations being a union of all the relations of the groups $G_v$ together with relations that ensure $[G_v, G_w]=_{G_\Gamma}1_{G_\Gamma}$ for any $\{v,w\} \in E(\Gamma)$. Consequently, direct products of finite sets of groups and free products of finite sets of groups are both types of products that can be described as particular graph products of groups.

Before proving Theorem \ref{thm:graphproductepiRegular}, we shall define some terminology and state some results from \cite{Hermillergraphproduct} which we shall rely on in our proof. For this purpose, we follow the exposition of \cite{Hermillergraphproduct}, and redirect the reader to \cite{Hermillergraphproduct} for more details. We note that the work in \cite{Hermillergraphproduct} is based on \cite{greenthesis}.

For the remainder of this subsection we shall fix a particular structure \[(\Gamma,\{\langle X_v\mid R_v\rangle\mid v\in V(\Gamma)\})\]
and its associated graph product $G_\Gamma=\langle X\mid R\rangle$ defined as in Definition \ref{def:graphProd}.  We shall also assume that there is a total order $\preceq$ on the vertices $V(\Gamma)$ of $\Gamma$.

\begin{defin}[Local string and type]
    Let $\omega \in X^*$. A non-empty contiguous substring $\omega'$ of $\omega$ is said to be a \emph{local string} if it is written in letters from the generators of exactly one vertex group, and it is not contained in a longer contiguous substring containing $\omega'$ that is written in letters of the generators of exactly one vertex group. The \emph{type of a local string} is the corresponding vertex.
\end{defin}
Since any string in $X^*$ is a product of local strings we can define the \emph{type of a string}.
\begin{defin}[Type of a string]\label{def:typeString}
    Let $\omega  \in X^*$ be a string. The concatenation of the types of the local strings of $\omega$ taken in order is the \emph{type} of $\omega$.
\end{defin}
For example, suppose $\omega\in X^*$ factors as the concatenation of three non-empty local strings $\omega=\omega_u \omega_v \omega_w$ where $\omega_u$ is a local string of type $u$, $\omega_v$ is a local string of type $v$, and $\omega_w$ is a local string of type $w$, then the type of $\omega$ is $uvw$.

Note also that in Definition \ref{def:typeString}  we are allowing the empty string as a type (it is the type of the empty string from $X^*$).

\begin{defin}[Global Length]
    The \emph{global length} of a string is the number of local strings it contains.
\end{defin}

That is, for $\omega\in X^*$ the global length of $\omega$ is the actual length of its type as a string over the alphabet of vertices of $\Gamma$.

We may \emph{shuffle} local string $\omega_u$ and $\omega_v$ in a string if $\omega_u$ and $\omega_v$ are adjacent in that string, and $\{u,v\} \in E(\Gamma)$. That is, we may use the commuting relations defined by $[G_u, G_v]=_{G_\Gamma} 1_{G_{\Gamma}}$ to obtain an equivalent string in the group when the corresponding vertices are adjacent. If in the process of shuffling, two local strings of the same type become adjacent then they amalgamate into a single local string. If it is equal to the identity of the corresponding group, then it is removed.

The process we described above eventually stabilises global length. That is, it will no longer be possible to shuffle local strings of the same type to become adjacent. 

We summarize the construction above (corresponding to the first two paragraphs of page 238 of   Hermiller and Meier's \cite{Hermillergraphproduct}, this is the 9th page of the article).

\begin{lem}\label{lem:prune}
    The process outlined above of taking a string $w$ and shuffling and amalgamating local strings in iterative fashion eventually produces a string $z$ which has a smallest representative type string $t$ with respect to ShortLex ordering on the types, and this type string $t$ is unique.
\end{lem}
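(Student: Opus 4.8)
The plan is to recognise this statement as the normal form theorem for graph products of groups, read off at the level of vertex \emph{types}, and to isolate the one nontrivial ingredient: confluence of the pruning process. Call a string $z\in X^*$ \emph{reduced} if none of its local strings represents the identity of its vertex group and no two local strings of $z$ of the same type can be made adjacent by a sequence of shuffles. Every move in the process (a shuffle, an amalgamation, or a removal) preserves the image in $G_\Gamma$; moreover shuffling preserves global length, while amalgamation and identity-removal strictly decrease it. Hence, using the already-noted fact that global length stabilises, the process halts precisely when no amalgamation or removal is enabled by any sequence of shuffles, that is, at a reduced string $z$ with $z=_{G_\Gamma}w$.

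The core of the statement is the assertion that \emph{any two reduced strings representing the same element of $G_\Gamma$ differ only by a sequence of shuffles}. This is exactly the graph-product normal form theorem of Green, in the form used by Hermiller and Meier, and I would invoke it directly rather than reprove it \cite{greenthesis, Hermillergraphproduct}. The usual self-contained proof is a confluence argument (Newman's lemma for the rewriting system modulo the shuffling relations, or an induction on global length via a deletion condition), and this is the step I expect to be the main obstacle were one to argue from scratch. Granting it, any two reduced forms obtained from $w$ have the same global length, and their type strings are related by repeatedly transposing adjacent letters $u,v$ with $\{u,v\}\in E(\Gamma)$; equivalently, all these types lie in a single partial-commutation class $[\tau]$ depending only on $w$.

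It then remains to extract $t$. Fixing one reduced $z$ of type $\tau$, the set of type strings reachable from $\tau$ by shuffles is finite (each element of it has the same length and the same multiset of letters as $\tau$, so the set is contained in the permutations of $\tau$) and nonempty, while ShortLex is a total order on strings over the ordered vertex alphabet $(V(\Gamma),\preceq)$; hence this set contains a unique ShortLex-minimal element $t$. By the confluence step of the previous paragraph the set is precisely $[\tau]$ and so is independent of the choices made during pruning. Therefore $t$ is a well-defined invariant of $w$, which gives the claimed existence and uniqueness.
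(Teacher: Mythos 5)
Your proposal is correct and takes essentially the same route as the paper: the paper offers no self-contained proof of this lemma, presenting it as a summary of the first two paragraphs of page 238 of Hermiller--Meier \cite{Hermillergraphproduct} (with the underlying confluence content due to Green \cite{greenthesis}), which is precisely the step you invoke rather than reprove. The pieces you add explicitly --- termination via monotonicity of global length, and extraction of the unique ShortLex-minimal type from the finite partial-commutation class --- are the same routine bookkeeping the paper leaves implicit in its citation.
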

    
    \begin{defin} In the statement of Lemma \ref{lem:prune} we refer to $z$ as a \emph{pruning of $w$} or we say we have \emph{pruned $w$ to $z$}.  We also say that $t$ is the \emph {pruned-type of $w$}.
\end{defin}

The following is an equivalent formulation of Proposition 3.1 of \cite{Hermillergraphproduct}.
\begin{lem}[\cite{Hermillergraphproduct},Proposition 3.1] \label{typestringuniquelem} 

Each group element in $G_{\Gamma}$ uniquely determines the type of any pruned string representing it. 
\end{lem}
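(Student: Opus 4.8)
The plan is to deduce the statement from the normal form theory for graph products developed in \cite{Hermillergraphproduct} (following \cite{greenthesis}), the essential input being that any two fully pruned representatives of a single group element differ only by shuffles. Fix $g\in G_\Gamma$ and let $w_1,w_2\in X^*$ be any two strings with $w_1=_{G_\Gamma}g=_{G_\Gamma}w_2$. Pruning each produces strings $z_1$ and $z_2$ of minimal global length, on which no further amalgamation or removal of identity local strings is possible; by Lemma \ref{lem:prune} these carry well-defined pruned-types $t_1$ and $t_2$, where $t_i$ is the ShortLex-minimal type among all strings obtainable from $z_i$ by shuffling. The goal is then exactly to show $t_1=t_2$.

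First I would record the two features of shuffling that drive the argument: a shuffle replaces a factor of type $uv$ by one of type $vu$ whenever $\{u,v\}\in E(\Gamma)$ and does not alter the element of $G_\Gamma$ represented, and it leaves global length unchanged. Consequently the collection of types reachable from a fixed pruned string by shuffling is precisely one class of the trace (partially commutative) equivalence on type strings induced by the edge relation of $\Gamma$. Such a class is finite, so its ShortLex-minimal element is well defined, which is what makes the pruned-type unambiguous in the first place and lets me compare $t_1$ and $t_2$ directly.

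The crux is to show that $z_1$ and $z_2$ have shuffle-equivalent types. This is exactly the content of Proposition 3.1 of \cite{Hermillergraphproduct} (equivalently Green's normal form theorem \cite{greenthesis}): two reduced words representing the same element of a graph product are related by a sequence of shuffles, so in particular their type strings lie in a common trace-equivalence class. Granting this, the sets of types reachable from $z_1$ and from $z_2$ by shuffling coincide, hence so do their ShortLex-minimal elements, giving $t_1=t_2$; since $w_1,w_2$ were arbitrary representatives of $g$, the pruned-type is a function of $g$ alone, which is the assertion.

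I expect the genuine obstacle to be precisely this last input, establishing that two fully pruned representatives of the same element are shuffle-equivalent, as everything else is bookkeeping about types and the ShortLex order. If one wished to prove it in place rather than cite it, the standard route is a confluence argument for the rewriting system whose moves are removal of identity local strings and amalgamation-after-shuffling of equal-type local strings: one shows the system terminates (global length strictly drops under a genuine reduction, and within a fixed global length only finitely many shuffles are available) and is locally confluent, so that the reduced form is unique up to shuffles. I would flag that the delicate point in the local confluence analysis is the case where a local string must be transported past several mutually commuting local strings before two equal-type strings can be amalgamated, since this is where overlapping reductions interact.
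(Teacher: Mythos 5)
Your proof is correct and takes essentially the same approach as the paper: the paper offers no independent argument at all, simply presenting the lemma as an equivalent formulation of Proposition 3.1 of \cite{Hermillergraphproduct}, and your crux is the very same citation (shuffle-equivalence of reduced words representing the same element, i.e.\ Green's theorem), with the remaining ShortLex/trace-class bookkeeping making that equivalence explicit. The appended confluence sketch is a reasonable note on how one would prove the cited input from scratch, but neither you nor the paper actually needs to carry it out.
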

We note that the identity element is the only element with $\varepsilon$ as the type of the pruned string representing it (which by definition is the empty string, as well).

In \cite{Hermillergraphproduct}, the authors give a normal form for graph products of groups (assuming one already has a normal form for every vertex group) consisting of \emph{proper} strings. They do this in their Proposition 3.2.  We do not have such a normal form in general as our vertex groups need not have normal forms.  However, our construction still relies on Hermiller and Meier's property (O), which is a sub-property of their definition of a proper string.
\begin{defin}
    A string $\omega$ is said to be \emph{proper} if  satisfies the following conditions.
    \begin{enumerate}
        \item[(L)] Each local string is in its prescribed normal form.
        \item[(O)] If $\omega = \cdots \omega_{I} \cdots \cdots \omega_J \cdots$ with $I \geq J$ and $\{v_I,v_J\} \in E(\Gamma)$ then there is a local string $\omega_K$ such that $\omega = \cdots \omega_I \cdots \omega_K \cdots \omega_J \cdots$ where $\{v_K, v_J\} \notin E(\Gamma)$.  
    \end{enumerate}
\end{defin}
The authors of \cite{Hermillergraphproduct} prove in Proposition 3.2 that the set of proper strings is a set of normal forms for $G_{\Gamma}$.

In the proof of Theorem B of \cite{Hermillergraphproduct}, the authors construct a graph which they call the \emph{admissible graph}. This graph is a finite state automaton $\mathcal{A}_{AG}(\Gamma)$ accepting the proper strings of a graph product of cyclic groups of order $2$. Further, since cyclic groups of order $2$ have only one non-trivial element, the map that sends a vertex group generator to its type induces a transformation taking any proper string  precisely to its type string. Thus as in Theorem B of \cite{Hermillergraphproduct},  $\mathcal{A}_{AG}$ determines the types of strings that are allowed. That is, for every type of a proper string in a graph product of groups, the string can be read off by following a path in the admissible graph. Conversely, any path in the admissible graph reads the type of a proper string. In the proof below, we shall replace states of the admissible graph by the automata that demonstrate the vertex groups are epiC. First we shall state a few properties of the admissible graph so that our construction is clear. For the construction of the admissible graph itself, we redirect the reader to \cite{Hermillergraphproduct}.

The admissible graph of a graph $\Gamma$, $A_{AG} (\Gamma)$, constructed in \cite{Hermillergraphproduct} is a finite state automaton with the following properties:
\begin{enumerate}
    \item There is a unique initial state, this is not a final state.
    \item Every non-initial state is labelled by some vertex $v_I$ of the graph $\Gamma$. (The construction of the admissible graph does not guarantee a unique non-initial state for every vertex $v_I$.)
    \item Every edge of the admissible graph from a state labelled by $v_J \in V(\Gamma)$ to a state labelled by $v_I \in V(\Gamma)$ has as a label the generator of $G_I$. (Recall the admissible graph is based on the graph product of cyclic groups of order $2$ over $\Gamma$, hence these groups are generated by a single element.) 
    \item Every non-initial state is an accept state.
    \item There are no $\varepsilon$-transitions in the admissible graph.
    \item The language accepted by the admissible graph is the set of strings of types of proper strings of $G_\Gamma$. 
\end{enumerate}
Further, we extract the following statement from \cite{Hermillergraphproduct}. We do not prove this here as this is covered in Proposition 3.2 as well as Theorem B of \cite{Hermillergraphproduct}.

\begin{lem}\label{graphproductepiexpression}
  Let $g \in G_{\Gamma}$ be a non-trivial element of $G_{\Gamma}$. Then there is a string $\omega$ in the generators of $G_{\Gamma}$ such that $\omega =_{G_\Gamma} g$ and the type string of $\omega$ is the type string of some proper string of the graph product of cyclic groups of order $2$ over $\Gamma$. Further, every local string of $\omega$ is non-trivial in its corresponding vertex group.  
\end{lem}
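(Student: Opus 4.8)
The plan is to obtain the required representative by pruning and then to identify its pruned-type with a proper-string type over the graph. First I would take any word $w\in X^*$ with $w=_{G_\Gamma}g$ and prune it, via Lemma \ref{lem:prune}, to a string $z$ with unique ShortLex-minimal pruned-type $t$. Setting $\omega:=z$, we automatically have $\omega=_{G_\Gamma}g$, since shuffling, amalgamation, and deletion of identity local strings all preserve the represented group element. Since pruning deletes every local string equal to the identity of its vertex group and amalgamates adjacent local strings of equal type, every local string of $\omega$ is non-trivial in its corresponding vertex group, which is the last assertion of the lemma. Moreover, because $g\neq 1_{G_\Gamma}$, the remark following Lemma \ref{typestringuniquelem} gives $t\neq\varepsilon$, so $\omega$ is genuinely non-empty.

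It then remains to show that $t$ is the type string of a proper string of the graph product of cyclic groups of order $2$ over $\Gamma$. Here I would first note that in that order-$2$ graph product each vertex group has a unique non-trivial normal form, namely its single generator read as a local string of length one; hence condition (L) is automatic and a proper string there is completely determined by, and may be identified with, its type string. Consequently the type strings of proper strings of the order-$2$ graph product are exactly the strings over $V(\Gamma)$ satisfying condition (O) — equivalently, by the stated property that $\mathcal{A}_{AG}(\Gamma)$ accepts precisely the type strings of proper strings, exactly those accepted by the admissible graph. Thus the whole claim reduces to verifying that $t$ satisfies (O).

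To verify (O) I would argue by contradiction from the ShortLex-minimality of $t$ supplied by Lemma \ref{lem:prune}. Suppose (O) fails: then $\omega$ contains a local string $\omega_I$ occurring before a local string $\omega_J$ with $v_I\succ v_J$ (note $v_I\neq v_J$, as $\Gamma$ is simple and $\{v_I,v_J\}\in E(\Gamma)$), with $\{v_I,v_J\}\in E(\Gamma)$, and with every local string strictly between them commuting with $v_J$. In particular no intermediate local string can have type $v_J$: since $\Gamma$ has no self-loops, such a string would fail to commute with $v_J$ and would act as a blocker, contradicting the failure of (O). Hence shuffling $\omega_J$ leftward past $\omega_I$ and past all the intermediate local strings is a legitimate sequence of commutations causing no amalgamation, producing a representative of $g$ of the same global length whose type agrees with $t$ before the position of $\omega_I$ but carries the strictly smaller letter $v_J$ in that position. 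This type is strictly ShortLex-smaller than $t$, contradicting minimality. Therefore $t$ satisfies (O), and $\omega$ has all the required properties.

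The hard part will be the careful bookkeeping in the (O) verification: specifically, confirming that the failure of (O) always yields an honest, length-preserving shuffle. The delicate point is ruling out an intermediate local string of type $v_J$, which would simultaneously block the proposed move and disqualify the pair as a witness to the failure of (O); once this edge case is dispatched using simplicity of $\Gamma$, the contradiction with ShortLex-minimality is genuine and the remaining steps are routine assembly of the pruning and admissible-graph facts already recorded above.
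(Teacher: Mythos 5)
Your proposal is correct in substance, but note that the paper does not actually prove this lemma: it is explicitly ``extracted'' from Hermiller--Meier, with the justification deferred wholesale to Proposition 3.2 and Theorem B of \cite{Hermillergraphproduct}. So your argument is a genuinely different route --- a self-contained reconstruction inside the paper's own pruning framework, using Lemma \ref{lem:prune} to produce a representative with non-trivial local strings and ShortLex-minimality (equivalently, the uniqueness of the pruned type in Lemma \ref{typestringuniquelem}) to force condition (O). What this buys is independence from the black-boxed normal-form theorem: you never need the full statement that proper strings form a set of normal forms, only the type-level facts the paper already records, together with the observation that in the order-$2$ graph product a proper string is determined by its type (here you should also note that adjacent local strings of $\omega$ have distinct types by maximality, so the type $t$ really does correspond to a word in the order-$2$ product whose local strings are single generators). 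What the citation buys the authors is precisely avoiding the bookkeeping you flag.

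Two points need tightening, though neither is fatal. First, the paper's description of pruning only removes a trivial local string when it arises from an \emph{amalgamation}; as literally stated, a local string of $w$ that evaluates to the identity of its vertex group and never meets another string of its type survives pruning, which would break both your non-triviality claim and the type computation. You should either read the process as also deleting trivial local strings outright (this is the reading under which Lemmas \ref{lem:prune} and \ref{typestringuniquelem} are actually true, and is what \cite{Hermillergraphproduct} does), or interleave: prune, delete trivial local strings, re-prune; this terminates since each deletion shortens the string. Second, in the (O) verification your ``no amalgamation'' claim only rules out $\omega_J$ amalgamating with the strings it passes; extracting $\omega_J$ also creates two new adjacencies, namely between the two strings that flanked $\omega_J$, and between $\omega_J$ and the string immediately preceding $\omega_I$. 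Neither can amalgamate --- if either pair shared a type, then $z$ would admit a shuffle sequence making two same-type local strings adjacent, contradicting that $z$ is pruned --- and even if an amalgamation did occur it would only shorten the type string, which is again ShortLex-smaller than $t$, so your contradiction survives in either case. With these two remarks inserted, the proof is complete.
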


We are now ready to prove the following theorem. 

In the proof below we envisage changing between machines of the same type, each to do the local calculations over each vertex group: specifically we replace the graph above by a large machine of the appropriate type consisting of smaller machines replacing the vertices of the graph above, and which carry out the calculations local to each of those vertex groups individually. We only allow transitions between local machines when we are currently in an accept state on a local machine and the next input is a generator from a next valid group (validity being determined by the connectivity of the graph $\Gamma$). 
 Each valid transition effectively flushes the previous memory structure (either by using a new alphabet and ignoring old alphabet letters or treating them as boundaries in the new local memory calculations (e.g., in the LBA case)), each local calculation starts if it entered the local machine at an initial state and read the one letter of input which triggered the transition to this machine.
 
\begin{thm} \label{thm:graphproductepiRegular}
Let $\Gamma$ be a finite graph as above and $C$ a language class in \\$\{Regular, CF, CS, RE\}$. For every $v_I \in V(\Gamma)$, let $G_I$ be an epiC group with finite monoid generating set $X_I$. Let $G_{\Gamma}$ be the graph product of groups $\{G_I \mid v_I \in V(\Gamma)\}$ over the graph $\Gamma$. Then $G_{\Gamma}$ is epiC.
\end{thm}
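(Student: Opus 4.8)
The plan is to build the witnessing language for $G_\Gamma$ as an $\varepsilon$-free substitution into the regular language accepted by the admissible graph, and then to check correctness using the pruning theory recalled above. By Theorem \ref{thm:PropGrps} we may fix, for each vertex $v_I$, a language $L_I\subseteq X_I^*$ in class $C$ demonstrating that $G_I$ is epiC over the given generating set $X_I$; note that $\varepsilon\notin L_I$ (the empty word lies in $WP(G_I,X_I)$), so $L_I\subseteq X_I^{+}$. Let $\sigma$ denote the substitution on the alphabet $V(\Gamma)$ sending each type letter $v_I$ to $L_I$, and set
\[
L_\Gamma \;\seteq\; \sigma\bigl(\mathcal{L}(\mathcal{A}_{AG}(\Gamma))\bigr)\subseteq X^*.
\]
Explicitly, $L_\Gamma$ is the set of strings $\omega=\omega_1\cdots\omega_k$ whose type string $v_{I_1}\cdots v_{I_k}$ is accepted by $\mathcal{A}_{AG}(\Gamma)$ and whose $i$-th local string lies in $L_{I_i}$. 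This is precisely the machine picture described before the theorem: each non-initial state of the admissible graph, labelled $v_I$, is replaced by a copy of an acceptor of $L_I$, and a fresh local computation (with flushed memory) is launched each time the read head crosses from one vertex alphabet into an adjacent one allowed by $\mathcal{A}_{AG}(\Gamma)$.

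First I would confirm $L_\Gamma\in C$. Since $\mathcal{A}_{AG}(\Gamma)$ is a finite state automaton, $\mathcal{L}(\mathcal{A}_{AG}(\Gamma))$ is regular over $V(\Gamma)$. Each $L_I$ is in $C$ and is $\varepsilon$-free, so $\sigma$ is an $\varepsilon$-free substitution; as each of Regular, CF, CS and RE is closed under $\varepsilon$-free substitution into a regular language (equivalently, under finite union, concatenation, and Kleene-plus of class-$C$ languages), we get $L_\Gamma\in C$. The $\varepsilon$-freeness is exactly what keeps the context-sensitive (LBA) case honest and what lets the local machines be concatenated without spurious empty blocks.

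Next I would verify the two defining properties of epiC. For surjectivity onto $G_\Gamma\setminus\{1_{G_\Gamma}\}$, take a non-trivial $g$; by Lemma \ref{graphproductepiexpression} there is $\omega=_{G_\Gamma}g$ whose type string $s=v_{I_1}\cdots v_{I_k}$ is the type of a proper string (hence accepted by $\mathcal{A}_{AG}(\Gamma)$) and each of whose local strings $\omega_i$ is non-trivial in $G_{I_i}$. Replacing each $\omega_i$ by a word $\omega_i'\in L_{I_i}$ with $\omega_i'=_{G_{I_i}}\omega_i$ (one exists since $L_{I_i}$ demonstrates epiC for $G_{I_i}$) gives $\omega'=\omega_1'\cdots\omega_k'$; because $s$ has no two consecutive equal types and the alphabets $X_{I_i}$ are pairwise disjoint, the type string of $\omega'$ is again $s$, so $\omega'\in L_\Gamma$ and $\omega'=_{G_\Gamma}g$. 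For disjointness from the word problem, let $\omega=\omega_1\cdots\omega_k\in L_\Gamma$; its type string $s$ is non-empty (the initial state of $\mathcal{A}_{AG}(\Gamma)$ is not accepting), it is the type of a proper string, and each $\omega_i$ is non-trivial, so no local string is deleted during pruning and the pruned-type of $\omega$ equals $s\neq\varepsilon$. By Lemma \ref{typestringuniquelem} (and the remark that $1_{G_\Gamma}$ is the unique element with pruned-type $\varepsilon$), we conclude $\omega\neq_{G_\Gamma}1_{G_\Gamma}$, whence $L_\Gamma\cap WP(G_\Gamma,X)=\varnothing$.

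I expect the main obstacle to be this last step: verifying that no word of $L_\Gamma$ collapses to the identity. The delicate point is that the type strings accepted by $\mathcal{A}_{AG}(\Gamma)$ are \textit{reduced}, in the strong sense that no two local strings of equal type can be shuffled into adjacency; consequently, since all local strings of $\omega$ are non-trivial, no amalgamation or cancellation can occur and the global length cannot drop under pruning. This is exactly the content of the Hermiller–Meier normal-form theory (Proposition 3.2 of \cite{Hermillergraphproduct} together with Lemma \ref{lem:prune}), so the crux of the argument is invoking that machinery correctly rather than performing any new combinatorial computation; the remaining language-theoretic and surjectivity steps are routine given the substitution closure and Lemma \ref{graphproductepiexpression}.
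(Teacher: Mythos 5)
Your proof is correct, and its group-theoretic core coincides with the paper's: both arguments take the Hermiller--Meier admissible graph, attach to each state labelled $v_I$ the epiC language of the vertex group $G_I$, and then verify the two defining conditions via Lemma \ref{graphproductepiexpression} (every non-trivial element has a representative whose type is the type of a proper string, with all local strings non-trivial) and Lemma \ref{typestringuniquelem} (only the identity has pruned-type $\varepsilon$). Where you genuinely diverge is in how the witnessing language is shown to lie in the class C. The paper builds an explicit acceptor by gluing copies of the local machines onto the admissible graph, and the bulk of its proof is a four-way case analysis (FSA, PDA, LBA, TM) on the transitory states: $\varepsilon$-transitions that flush the PDA stack down to $\perp$, rewriting of the LBA's left boundary marker, a separate global accept state in the LBA case, and so on. You replace all of that machinery with one language-theoretic fact: each of Regular, CF, CS and RE is closed under $\varepsilon$-free substitution, applied to the regular language $\mathcal{L}(\mathcal{A}_{AG}(\Gamma))$ (which is $\varepsilon$-free since the initial state is non-accepting) and the $\varepsilon$-free local languages $L_I$. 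That fact is standard but is the one point to cite with care: for Regular, CF and RE it is classical, and for CS the needed statement is precisely closure under $\varepsilon$-free substitution, which is where your insistence on $\varepsilon$-freeness earns its keep. Your route buys brevity and generality --- the proof visibly works for any language class closed under $\varepsilon$-free substitution, in the same spirit as the paper's remarks that its other closure theorems depend only on closure properties of C --- while the paper's route buys self-containedness and an explicit description of how the composite machine runs. One small imprecision in your last step: the pruned-type of an accepted word $\omega$ need not literally equal its type string $s$ (the pruned-type is the ShortLex-least type among shuffle-equivalent stabilized strings), but since no amalgamations or deletions can occur, it has the same positive global length as $s$, which is all that the appeal to Lemma \ref{typestringuniquelem} requires; the paper's own wording carries the same harmless looseness.
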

\begin{proof}
 We will construct a machine  (FSA, PDA, LBA, or TM) $\mathcal{A}$ demonstrating that $G_{\Gamma}$ is epiC.
 
 For every $v_I \in V(\Gamma)$, let $\mathcal{A}_{I}$ be a machine (FSA, PDA, LBA, or TM) demonstrating that  $G_I$ is epiC.

(We shall call these $\mathcal{A}_I$ \emph{local machines}.) We will construct $\mathcal{A}$ by ``gluing'' the $\mathcal{A}_{I}$ onto every vertex of the admissible graph $\mathcal{A}_{AG}(\Gamma)$ that is labelled by $v_I$ as follows. 

For all states $p$ and $q$ of $\mathcal{A}_{AG}(\Gamma)$ that are labeled by $v_I$ and $v_J$, respectively with an edge from $p$ to  $q$ in $\mathcal{A}_{AG}(\Gamma)$ we do the following:

Remove that edge, and insert an \(\varepsilon\) transition from each final state of $\mathcal{A}_I$ to the new state $t_{q}$ which is a transitory state for entering the machine $\mathcal{A}_J$ as if no processing has occurred. How the state \(t_q\) functions depends on the type of machine we're working with, there are four cases:
\begin{enumerate}
    \item FSA: The initial state of $\mathcal{A}_J$ shall be identified with $t_q$ (we assume by the powerset construction that $\mathcal{A}_J$ has a unique initial state) and processing will carry on as normal for this machine.
    \item PDA: Each transition to $t_q$ from the local machine $\mathcal{A}_I$ shall be an $\varepsilon$-transition that ignores the stack.  The state $t_q$ will have loop transitions that are $\varepsilon$-transitions which delete any letters on the current stack one letter at a time, until the bottom-of-stack symbol $\perp$ of $\mathcal{A}$ appears.  We shall use this same bottom-of-stack symbol for each of the local machines.  With the stack cleared we now can employ any $\varepsilon$-transition to any initial state of the local machine $\mathcal{A}_J$ (where we have one such transition for each of it's initial states). From this point, processing continues as normal for that local machine. 
    
   \item LBA: For a reference to the descriptions of LBA's which we use, the reader can consult Section 10.5 of \cite{Linz}.  Unlike the three other machine types, the machine $\mathcal{A}$ for LBA's shall have a global accept state which is different from the local accept states which appear in the local machines. The essential idea for LBA's is that calculations in an LBA take place on a tape with delineated left and right boundary symbols ("[" and "]" for discussion) which bracket the input word.  The LBA may write on this tape between the boundaries as part of its calculation.  In our context, as we are calculating a sequence of accepted (local) words, when the machine sees a letter from a generator of a different local group, that letter shall be perceived as the right relevant boundary.  Left boundaries will be updated with a new ``[" symbol replacing whatever symbol appears at the final position for allowed calculations in the previous local machine.  When a local machine achieves an accept state, it shall read an $\varepsilon$-transition loop to this state which has the impact of moving the read head to the right until it is over the right boundary, which is either a letter from the generating set of the next group, or it is the right boundary symbol ``]".  In the latter case, there is a transition to the global accept state for $\mathcal{A}$.  In the other case, there is an $\varepsilon$-transition to the appropriate $t_q$ state for the next local machine. From the state $t_q$ initial processing shall be through a collection of $\varepsilon$-transitions supporting the following.  First the read head should move left one space and write a ``[" over whatever was there so that the new processing for the machine $\mathcal{A}_J$ has a good left boundary. Then it shall transition to one of the initial states of $\mathcal{A}_J$ and processing begins as normal.

   On processing of the final local word, the ``]" appears as the final boundary, so if an accept state is achieved for the local machine, we can follow with an $\varepsilon$-transition to an accept state for the global machine $\mathcal{A}$.
  \item TM: Move to any initial state of \(\mathcal{A}_I\).
\end{enumerate}

The initial state of this new automaton $\mathcal{A}$ will be the initial state of $\mathcal{A}_{AG}(\Gamma)$. The final states consist of all the final states of every copy of $\mathcal{A}_I$ that we have glued onto the admissible graph by the above process. (If there is an edge from a sub-automaton to another, then it is from a final state of one sub-automaton to the initial state of the other.)

We note that any path in $\mathcal{A}$ projects onto a path in $\mathcal{A}_{AG}(\Gamma)$ by identifying all the edges in a sub-automaton $\mathcal{A}_I$, and edges between a final state of a sub-automaton and the initial state of another will be edges between the vertices of $\mathcal{A}_{AG}(\Gamma)$ that the sub-automata were glued onto. Therefore, any path in $\mathcal{A}$ must yield the same type string as a path in $\mathcal{A}_{AG}(\Gamma)$.

Since the initial state is not an accept state and the identity element of $G_{\Gamma}$ has type $\varepsilon$, by Lemma \ref{typestringuniquelem} the identity element of $G_\Gamma$ is not represented by any accepted string of the above automaton. By Lemma \ref{graphproductepiexpression}, we know that every non-trivial element of $G_{\Gamma}$ can be expressed as a string whose type is the type of a proper string of the graph product of cyclic groups of order 2 over $\Gamma$, with every local string $\omega_I$ being non-trivial in $G_I$. For each such local string, there is a string $\omega'_I$ in the language accepted by $\mathcal{A}_I$ such that $\omega_I =_{G_I} \omega'_I$. Thus the automaton $\mathcal{A}$ simply replaces the types of proper strings of the graph product of cyclic groups with these local strings $\omega'_I$ while preserving the type string. Therefore there is a string representing every non-trivial element of $G_{\Gamma}$ in $\mathcal{L}(\mathcal{A})$, and there is no string in $\mathcal{L}(\mathcal{A})$ representing the identity element of $G_{\Gamma}$. Therefore $G_{\Gamma}$ is epiC.
\end{proof}

This completes the proof of Theorem \ref{thm:ClosurePropGroups}. We note that it follows from Theorem \ref{thm:graphproductepiRegular} that the class of groups epiC is closed under direct and free products.






\section{Autostackable groups}\label{sec:autostackable}
In this section, we shall refer to generating sets as being inverse closed as this is standard in the literature regarding autostackable groups.  Recall that a generating set $X$ is inverse closed if for all $x\in X$ we have $x^{-1}\in X$.  In particular, an inverse closed generating set generates its group as a monoid as well.
\begin{defin}
    For a finitely generated group $G$ and a finite inverse closed generating set $X$, we define $ \Gamma(G,X)$ to be the associated Cayley graph. That is the directed graph with vertex set \(G\), and directed edge set $\vec{E}$; where for each $g \in G$ and $a \in X$, there is a directed edge $e_{g,a}$  with initial vertex $g$, terminal vertex $ga$, and label $a$.
\end{defin}
\begin{defin}
    Let $G$ be a finitely generated group with a finite inverse closed generating set $X$. We say that $N \subseteq X^*$ is a set of normal forms for \(G\) if for all \(g\in G\) there is a unique string which we denote by $y_g\in N$ such that \(y_g=_Gg\).
\end{defin}

\begin{defin}
    Let $G$ be a finitely generated group with a finite inverse closed generating set $X$. Let $N$ be a set of normal forms for $G$ over $X$. Whenever we have an equality of strings $y_g a = y_{ga}$ or $y_g = y_{ga} a^{-1}$, we call the edge $e_{g,a}$ of $\Gamma(G, X)$ \emph{degenerate}.

    Let $\vec{E_{N,d}} = \vec{E}_d$ be the set of all degenerate directed edges of $\Gamma(G,X)$, and let $\vec{E}_{N,r} = \vec{E}_r := \vec{E} \setminus \vec{E}_d$; we refer to elements of $\vec{E}_r$ as \emph{recursive} edges.
\end{defin}

\begin{defin}[Padding] \label{def:tupleLang} Let $X$ be an alphabet and assume $\#$ is a symbol not in $X$. 
 Set $\widehat{X}\seteq X \cup \{\#\}$ and call the set $\widehat{X}$ the \emph{padded alphabet associated with $X$} and call $\#$ the \emph{padding symbol (for $X$)}. 
 
    Let $u,v,w \in X^*$, where $u=a_1 a_2 \cdots a_r$, $v=b_1 b_2 \cdots b_s$, and $w=c_1 c_2 \cdots c_t$ with each $a_i, b_i, c_i \in X$. 
 Set $k:=max(r,s,t)$ and $\overline{X}:= \widehat{X} \times \widehat{X} \times \widehat{X} \setminus \{(\#, \#, \#)\}$. We call $\overline{X}$ the trinary extension of $X$. We define $(u,v,w)^t \in \overline{X}^*$ to be the string 
    \[(\alpha_1, \beta_1, \gamma_1)(\alpha_2, \beta_2, \gamma_2) \cdots (\alpha_k, \beta_k, \gamma_k)\]
    where 
    \begin{enumerate}
        \item $\alpha_i=a_i$ for $1 \leq i \leq r$ and $\alpha_i = \#$ for $r < i \leq k$;
        \item $\beta_i =b_i$ for $1 \leq i \leq s$ and $\beta_i = \#$ for $s < i \leq k$;
        \item $\gamma_i = c_i$ for $1 \leq i \leq t$ and $\gamma_i = \#$ for $t < i \leq k$.
    \end{enumerate}
    We call $(u,v,w)^t$ the \emph{trinary projection corresponding to} $(u,v,w)$. 
\end{defin}
For example, for the strings $hi, bye, hello$ the trinary projection corresponding to $(hi, bye, hello)$ is 
\[(h,b,h)(i,y,e)(\#,e,l)(\#,\#,l)(\#,\#,o).\]

\begin{defin}[Synchronously Regular] 
Let $X$ be a set and let $\overline{X}$ be the trinary extension of $X$. A subset $L \subseteq (X^*)^3$ is called \emph{synchronously regular} if the set $L^t$ of trinary projections of $L$ forms a regular language over the alphabet $\overline{X}$.
\end{defin}

\begin{defin}[Autostackable]\label{def:autostackable}
    Let $G$ be a finitely generated group, and let $X$ be a finite generating set for $G$ which is inverse closed. We say $G$ is \emph{autostackable} if there is a set $N$ of normal forms for $G$ over $X$ containing the empty word $\varepsilon$, a constant $k$, and a function $\phi: N \times X \rightarrow X^*$ such that the following hold.

    \begin{enumerate}
        \item The graph of the function $\phi$
        \[graph(\phi):= \{(y_g, x, \phi(y_g,x)) \mid g\in G, x \in X\}\] is a synchronously regular language.
        \item For each $g \in G$ and $x \in X$, the word $\phi (y_g, x)$ has length at most $k$ and represents the element $x$ of $G$, and: 
            \begin{enumerate}
                \item[(2d)] If $e_{g,x} \in \vec{E}_{N,d}$, then the equality of words $\phi(y_g, x) = x$ holds.
                \item[(2r)] The transitive closure $<_{\phi}$ of the relation $<$ on $\vec{E}_{N,r}$, defined by \newline
                $e' < e_{g,x}$ whenever $e_{g,x}, e' \in \vec{E}_{N,r}$ and $e'$ is on the directed path $\phi(y_g,x)$ starting at the vertex $g$, 
                is a strict well-founded partial ordering.
            \end{enumerate}
    \end{enumerate}
\end{defin}

\begin{thm} \label{thm:autostackable}
If $G$ is an autostackable group with inverse closed generating set $X$ then $G$ has rational cross-section.
\end{thm}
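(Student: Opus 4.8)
The plan is to work directly from the definition of \emph{rational cross-section}: a finitely generated group $G=\langle X\rangle$ has rational cross-section precisely when there is a \emph{regular} language $N\subseteq X^*$ that maps bijectively onto $G$ under the evaluation map, i.e.\ a regular set of unique normal forms. The definition of autostackability already hands us such a set $N$ of normal forms for $G$ over $X$: by Definition~\ref{def:autostackable} it contains $\varepsilon$ and, being a set of normal forms, it contains exactly one string $y_g$ for each $g\in G$. Hence the entire content of the theorem reduces to a single claim, namely that \emph{$N$ is regular}. I note in advance that the length bound $k$, the degeneracy condition (2d), the well-foundedness condition (2r), and the requirement that $\phi(y_g,x)$ evaluate to $x$ will all be irrelevant to this argument; the only hypotheses I would use are that $graph(\phi)$ is synchronously regular and that $N$ is a set of unique normal forms.

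First I would observe that $N$ is exactly the set of first coordinates occurring in
$graph(\phi)=\{(y_g,x,\phi(y_g,x))\mid g\in G,\ x\in X\}$. Since $X\neq\varnothing$, every normal form $y_g$ appears as the first entry of the triple $(y_g,x,\phi(y_g,x))$ for any fixed $x\in X$, and conversely each first entry is some $y_g\in N$. Thus $N$ is the image of $graph(\phi)$ under projection onto the first coordinate. I would then transfer this projection to the level of the trinary projection $graph(\phi)^t\subseteq\overline{X}^*$, which is regular by the hypothesis that $graph(\phi)$ is synchronously regular. Define a monoid homomorphism $\rho:\overline{X}^*\to X^*$ on letters by sending $(\alpha,\beta,\gamma)\mapsto\alpha$ when $\alpha\in X$ and $(\alpha,\beta,\gamma)\mapsto\varepsilon$ when $\alpha=\#$. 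For any triple $(u,v,w)$ the first-coordinate track of $(u,v,w)^t$ is $u$ followed by a block of padding symbols, so $\rho$ strips the padding and returns $u$ itself; hence $\rho\bigl(graph(\phi)^t\bigr)=N$. Because $graph(\phi)^t$ is regular and the class of regular languages is closed under homomorphic images — including erasing homomorphisms such as $\rho$, which sends $\#$ to $\varepsilon$ (see \cite{HopcroftUllman}) — it follows that $N$ is regular. Being a regular set of unique normal forms for $G$ over $X$ containing $\varepsilon$, $N$ witnesses that $G$ has rational cross-section, completing the proof.

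The only point that requires genuine care — the rest being bookkeeping — is the interaction between the synchronous padding and the first-coordinate projection: one must verify that in any trinary projection the padding symbols of a given track occur only as a \emph{suffix} of that track, so that the erasing homomorphism $\rho$ recovers exactly $u$ rather than some interleaving of letters and padding. This is immediate from the left-justified definition of $(u,v,w)^t$ in Definition~\ref{def:tupleLang}, where each coordinate word is written first and the $\#$'s are appended afterwards. With that observation in hand, the theorem follows from closure of the regular languages under (possibly erasing) homomorphisms, and essentially none of the finer autostackability data is needed.
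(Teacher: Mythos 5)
Your proof is correct and is essentially the same as the paper's: both project the regular language $(graph(\phi))^t$ onto the first coordinate and erase the padding symbols, invoking closure of regular languages under (erasing) homomorphisms to conclude that $N$ is a regular set of unique normal forms. The only differences are cosmetic — you combine the paper's two composed homomorphisms into a single map $\rho$ and spell out details (why the projection image is exactly $N$, and why the padding occurs only as a suffix) that the paper leaves implicit.
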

\begin{proof}
Let $G$, $X$, $N$ and $\phi$ be as in Definition \ref{def:autostackable}, and $\overline{X}$ be the trinary extension of $X$  (as in Definition \ref{def:tupleLang}).  Denote by $g_\phi$ the set $graph(\phi)$.

By definition, as $g_\phi$ is a synchronously regular language we have that $(g_\phi)^t$ is a regular language over the alphabet $\overline{X}$. The semigroup homomorphism which extends the projection map to the first coordinate from $\overline{X}$ takes the regular language $(g_\phi)^t$ to a regular language in $(X\cup \{\#\})^*$. We may further compose by the semigroup homomorphism from $(X\cup\{\#\})^*$ to $X^*$ which replaces all appearances of $\#$ by the empty string $\varepsilon$ to find a regular language in $X^*$. This is precisely the set $N$ of normal forms for $G$. Thus showing that $G$ has rational cross-section. 
\end{proof}
This completes the proof of Theorem \ref{thm:Containments}.
\newcommand{\Fab}{F_{\{a,b\}}}
\newcommand{\Fon}[1]{F_{#1}}
\newcommand{\insight}[1]{{\color{brown}#1}}
\newcommand{\insightred}[1]{{\color{red}#1}}

\section{Non-examples}
In this section we shall demonstrate two theorems.  The first theorem is that there is an equivalence for finitely generated groups between having solvable word problem, and having a recursively enumerable presentation and being in the class epiRE.  It follows (see Corollary \ref{Cor:FPnotepiRE}) that there are finitely presented groups that are not epiRE.  The second theorem is that there are continuum many pairwise distinct isomorphism classes of 2-generated groups which are solvable with derived length three, and which are not epiRE. 
\subsection{Word problems}\label{subsec:solvableWP}
For a set $X$ (and formal set of ``inverses'' $X^{-1}$ for elements of $X$) and $R\subseteq (X\cup X^{-1})^*$ we say a group $G$ is presented by $\langle X | R \rangle$ if the normal closure of the images of $R$ in \(F_X\) is a subgroup of \(F_X\) with quotient isomorphic to \(G\), and we write in this case $G\cong \langle X | R \rangle$. 

In this section, we shall recall that Turing machines can be used in other ways apart from accepting languages, such as enumerating and outputting sets.

Translating Higman \cite{Higman65} for the case of a finitely generated group, we say a group $G$ with finite generating set $X$ has a \emph{recursively enumerable presentation} $G \cong\langle X | R\rangle$ if there is a Turing machine which will output the family $R$ of strings over \((X\cup X^{-1})^*\).  We say $G$ has a \emph{computable presentation} $G \cong\langle X | R\rangle$ if there is a Turing machine which determines in finite time whether or not a given string in \((X\cup X^{-1})^*\) is in the set $R$. 

We shall now prove the final theorem mentioned in Subsection \ref{subsec:Results}.

\thmSolvableWP*

\begin{proof}
\((\Rightarrow):\) 

One can check for each string in $X^*$ whether or not it is a coword, and accept just the cowords. Outputting the set of words shows that $G$ admits a recursively enumerable presentation.

\((\Leftarrow):\)
 Let \(f:\N \to (X\cup X^{-1})^*\) be some recursive enumeration of a language demonstrating that \(G\) is epiRE. 
 
Let \(T\) be the Turing machine which enumerates the language $R$.
 
From $T$ we can build another Turing machine $N$ which will output $W$, the set of all the words over \((X\cup X^{-1})^*\) which evaluate as elements of the normal closure.
 
 Let \(g:\N \to W\) be the corresponding enumeration of the output of $N$.
 
    The algorithm for checking if a string \(w\in (X\cup X^{-1})^*\)  belongs to \(W\) is as follows:

    \begin{enumerate}
        \item Loop over all elements \(i\) of \(\N\)
        \begin{enumerate}
            \item Check if \((i)g=w\), if yes then \(w\) is in the word problem.
            \item Check all \(j, k\leq i\) so see if \((j)fw^{-1}=(k)g\). 
            If the answer is ever yes then we declare that \(w\) is not in the word problem. 
        \end{enumerate}
    \end{enumerate}
\end{proof}

\begin{cor} \label{Cor:FPnotepiRE}
    There are finitely presented groups which are not epiRE.
\end{cor}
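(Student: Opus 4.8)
The plan is to leverage Theorem \ref{thm:SolvableWP}, which asserts that a finitely generated group is epiRE \emph{and} recursively presented if and only if it has solvable word problem. The key observation is that for a finitely presented group, the presentation is automatically computable (the relator set $R$ is finite, hence trivially recursively enumerable), so the recursive-presentation hypothesis in Theorem \ref{thm:SolvableWP} is free. Consequently, if a finitely presented group were epiRE, Theorem \ref{thm:SolvableWP} would force it to have solvable word problem. To produce a non-epiRE finitely presented group, it therefore suffices to exhibit a finitely presented group with \emph{unsolvable} word problem.

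First I would invoke the classical Novikov--Boone theorem: there exist finitely presented groups with unsolvable word problem. Fix such a group $G = \langle X \mid R \rangle$ with $X$ and $R$ finite. Since $R$ is finite, a Turing machine can enumerate $R$ (indeed just print it), so $G$ admits a recursively enumerable presentation in the sense defined in Subsection \ref{subsec:solvableWP}.

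Next I would argue by contradiction. Suppose $G$ were epiRE. Then $G$ satisfies both conditions of the second bullet in Theorem \ref{thm:SolvableWP}: it has a recursively enumerable presentation (as just noted) and it is in the class epiRE (by assumption). Theorem \ref{thm:SolvableWP} then yields that $G$ has solvable word problem, contradicting the choice of $G$. Hence $G$ is not epiRE, and since $G$ is finitely presented, this establishes the existence of a finitely presented group that is not epiRE.

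There is essentially no obstacle here beyond correctly citing the two ingredients: the Novikov--Boone construction supplies the finitely presented group with unsolvable word problem, and Theorem \ref{thm:SolvableWP} supplies the equivalence that converts ``not solvable word problem'' together with ``finitely (hence recursively) presented'' into ``not epiRE.'' The only point requiring a sentence of care is the remark that finite presentations are a special case of recursively enumerable presentations, so that the recursive-presentation clause of the theorem is satisfied automatically and cannot be the source of failure; the failure must lie in membership in epiRE.
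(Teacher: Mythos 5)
Your proposal is correct and is essentially the argument the paper intends: Corollary \ref{Cor:FPnotepiRE} is stated as an immediate consequence of Theorem \ref{thm:SolvableWP}, obtained exactly by combining the Novikov--Boone theorem with the observation that a finite relator set is trivially recursively enumerable, so a finitely presented epiRE group would have solvable word problem. No gaps; your extra sentence making the ``finitely presented implies recursively presented'' step explicit is a sound and welcome clarification of what the paper leaves implicit.
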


\begin{cor}
    For C\,\,$\in\{Regular, CF, CS, RE\}$, every finitely presented epiC group has a solvable word problem.
\end{cor}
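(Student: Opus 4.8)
The plan is to deduce this immediately from Theorem \ref{thm:SolvableWP} by checking that a finitely presented epiC group satisfies both clauses of that theorem's second equivalent condition. First I would invoke the nesting of language classes $Regular \subseteq CF \subseteq CS \subseteq RE$, which (as recorded in the introduction) induces the nesting $epiRegular \subseteq epiCF \subseteq epiCS \subseteq epiRE$. Consequently, a group that is epiC for any $C \in \{Regular, CF, CS, RE\}$ is in particular epiRE, so the second hypothesis of the backward direction of Theorem \ref{thm:SolvableWP} concerning membership in epiRE is met.

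Next I would verify the remaining hypothesis, namely that a finitely presented group admits a recursively enumerable presentation. This is elementary: if $G \cong \langle X \mid R \rangle$ with $X$ and $R$ both finite, then a Turing machine can simply print the finitely many elements of $R$ and halt, so $R$ is recursively enumerable and hence $G$ admits a recursively enumerable presentation in the sense used in Subsection \ref{subsec:solvableWP}. (Since the notion of having solvable word problem and the property of being epiC are both independent of the chosen finite generating set, by Theorem \ref{thm:PropGrps}, there is no loss in working with the generating set coming from the given finite presentation.)

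Having established that such a group $G$ is both epiRE and admits a recursively enumerable presentation, I would apply the $(\Leftarrow)$ implication of Theorem \ref{thm:SolvableWP} to conclude that $G$ has solvable word problem. I expect no genuine obstacle at this point: all of the substance lies in Theorem \ref{thm:SolvableWP}, and this corollary is merely its specialization to the finitely presented case, obtained by combining the two elementary observations above. The only point requiring any care is to note that ``finitely presented'' is stronger than ``recursively (indeed computably) presented,'' so the hypothesis of the theorem is comfortably satisfied.
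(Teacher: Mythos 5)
Your proposal is correct and matches the paper's intended argument exactly: the paper states this corollary without proof as an immediate consequence of Theorem~\ref{thm:SolvableWP}, relying precisely on the two observations you make (the nesting $epiC \subseteq epiRE$ and the fact that a finite presentation is trivially a recursively enumerable one). Your added remark about independence of the generating set via Theorem~\ref{thm:PropGrps} is a sensible piece of bookkeeping that only strengthens the write-up.
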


\subsection{Continuum many non-epiRE 2-generated groups}\label{subsec:uncountableNonEpiRE}
One might think that as there are continuum many two-generated groups with unsolvable word problem that it should be immediately clear that there are continuum many non-epiRE groups with unsolvable word problem.  However, Theorem \ref{thm:SolvableWP} only applies for groups with recursive presentations, and of course, there are only countably many such groups up to isomorphism.  Sadly, we also do not have an idea of how to specify an epiRE group which does not admit a recursively enumerable presentation.

In this subsection we closely follow a general construction due to Phillip Hall (see \cite{Hall54}), but as given in de la Harpe's \cite{delaHarpe}.  The discussion can be found in discussion points 40--43  Section III.C.  We give more details than appear in that source, and along the way we also correct a minor error in the text (in \cite{delaHarpe}, the author asserts that a set of group elements freely generate a free abelian group when they do not, but, that is not important for the overall construction). In doing so we give a slightly modified generating set and show freeness.  We will use aspects of that given construction in our proof of Theorem \ref{thm:uncountableNonRE}.

We aim to show that the cardinality of the  set of isomorphism classes of $2$-generated groups is of size continuum. 
We will actually aim for more than this statement, with an eye towards applications to epiRE groups.

The outline of what is to come is as follows:
\begin{enumerate}
\item demonstrate that the free group $\Fab$ freely generated by \(\{a, b\}\), has continuum many normal subgroups; and in particular we can find a quotient which is solvable of length 3 and whose center contains \(\oplus_{i\in \mathbb{N}} \mathbb{Z}\).
\item demonstrate that taking quotients by these normal subgroups will generate continuum many pairwise non-isomorphic groups; and moreover we can choose these normal subgroups to be in a chain.
\item We then show that any two comparable normal subgroups of \(\Fab\) give rise to quotients which cannot be shown to be epiRE by a common recursively enumerable language.
\end{enumerate}

\begin{lem}[\cite{delaHarpe}, III.C., 40]\label{lem:uncountablyManyNormalSubgroupsF2}
The set of normal subgroups of $\Fab$ is of size continuum.
\end{lem}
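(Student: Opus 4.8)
The plan is to produce an injection from the power set $2^{\N}$ into the set of normal subgroups of $\Fab$. Since $\Fab$ is countable, the collection of all its subgroups injects into $\mathcal{P}(\Fab)$ and so has cardinality at most $2^{\aleph_0}$; an injection in the opposite direction therefore pins the count at exactly continuum. So the whole task reduces to manufacturing $2^{\aleph_0}$ pairwise distinct normal subgroups.

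First I would carry out the P.\ Hall construction, following de la Harpe: build a two-generated quotient $\pi\colon \Fab \twoheadrightarrow G$ with $G$ solvable of derived length $3$ and with $Z(G) \supseteq \bigoplus_{i\in\N}\Z$. Concretely, one takes a suitable family of central commutators (built from the conjugates $a^{i}ba^{-i}$) and checks, inside the appropriate length-$3$ solvable quotient, that they are central and satisfy no nontrivial $\Z$-linear relation; call these free central generators $z_1, z_2, \dots$. For the present lemma the only feature that matters is that the center of $G$ contains a free abelian subgroup of countably infinite rank.

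Given such a $G$, for each $S\subseteq\N$ I set $Z_S := \langle z_i : i\in S\rangle \leq Z(G)$. Being central, $Z_S$ is normal in $G$, and the $\Z$-independence of the $z_i$ gives the equivalence $z_i \in Z_S \iff i\in S$, so $S\mapsto Z_S$ is injective. Pulling back along $\pi$ yields $N_S := \pi^{-1}(Z_S)\trianglelefteq \Fab$ (the preimage of a normal subgroup is normal). If $i \in S\setminus S'$, then any preimage of $z_i$ lies in $N_S\setminus N_{S'}$, whence $N_S\neq N_{S'}$. Thus $S\mapsto N_S$ is the desired injection $2^{\N}\hookrightarrow\{\,\text{normal subgroups of }\Fab\,\}$, and combined with the upper bound above the lemma follows.

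The genuine work, and the main obstacle, is the construction of $G$ together with the verification that the chosen central elements are honestly $\Z$-independent, i.e.\ that they freely generate $\bigoplus_{i\in\N}\Z$ inside $Z(G)$. This requires a careful analysis of the derived series of $G$ and of the module structure of the relevant abelian section, so as to rule out any collapse of a nontrivial word in the $z_i$; everything downstream is bookkeeping. This is also precisely the step at which we would incorporate the correction to de la Harpe's generating set flagged in the introduction, replacing it with a slightly modified family for which freeness can be established cleanly.
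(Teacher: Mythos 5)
Your overall strategy is the same as the paper's: find a two-generated quotient $G$ of $F_{\{a,b\}}$ whose centre contains a free abelian group of countably infinite rank, take the subgroups generated by subsets of a free basis (central, hence normal), and pull back along the quotient map; your bookkeeping for that reduction (the cardinality upper bound, the injectivity of $S\mapsto Z_S$, and the separation $N_S\neq N_{S'}$) is all correct. The problem is that you have explicitly deferred, rather than carried out, the step that constitutes essentially the entire proof: the construction of $G$ and the verification that the chosen central commutators are $\Z$-linearly independent. The paper's proof spends nearly all of its length on exactly this. It builds an auxiliary group $\Gamma_0=\langle s_i,\ i\in\Z \mid [[s_i,s_j],s_k],\ [s_i,s_j][s_{j+k},s_{i+k}]\rangle$, proves (Claim (a)) that the commutators $u_j=[s_0,s_j]$ are central and generate $[\Gamma_0,\Gamma_0]$, proves (Claim (b)) that $[\Gamma_0,\Gamma_0]$ is free abelian on $\{u_i\mid i>0\}$ by exhibiting an explicit homomorphism into a concretely defined group $\Gamma_0'$ (the set $(\oplus_{i\in\Z}\Z)\times(\oplus_{i>0}\Z)$ with a twisted multiplication, the relations being checked via von Dyck's theorem), and only then forms the two-generated group $\Gamma=\Gamma_0\rtimes_{\phi}\Z$, using the shift-invariance relations to guarantee that conjugation by the new generator $t$ fixes each $u_i$, so that the $u_i$ remain central in $\Gamma$ itself and not merely in $\Gamma_0$.

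Two specific points show the deferred step cannot simply be waved through. First, centrality must hold in the \emph{two-generated} group $G$, not just in the intermediate (infinitely generated) subgroup where the commutators naturally live; in the paper this is precisely the role of the relations $[s_i,s_j][s_{j+k},s_{i+k}]$, and your sketch never addresses how conjugation by the second generator acts on your elements $z_i$. Second, the independence claim is delicate enough that de la Harpe's own statement of it is wrong: the paper points out that the full family $\{u_i\mid i\neq 0\}$ is \emph{not} a free basis, because $u_{-i}=u_i^{-1}$, which is why the paper restricts to positive indices and constructs the explicit model $\Gamma_0'$ to establish freeness. So one cannot settle this step by citing the source, as the source's verification is faulty at exactly this point; without an actual construction of $G$ and an actual proof of independence, your argument is missing its core.
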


\begin{proof}
We follow de la Harpe's offered proof, inserting some commentary along the way.

Set $S=\set{s_i}{i\in\Z}$ as a set of generators indexed by $\Z$.  Define $R$ a set of relators as

\[
R\seteq\set{[[s_i,s_j],s_k]}{i,j,k\in\Z}\cup\set{[s_i,s_j][s_{j+k},s_{i+k}]}{i,j,k\in\Z}.
\]

We will initially be concerned with the group $\Gamma_0\seteq \langle S\mid R\rangle$. 
Set $u_j = [s_0,s_j]\in \Gamma_0$ for $j\in\Z \setminus \{0\}$, and $U\seteq \{u_i\mid i\in\Z \setminus \{0\} \}\subseteq \Gamma_0$. 

We see immediately from the relations of the form $[[s_i,s_j],s_k]$ that each element of $U$ lies in the centre of $\Gamma_0$.  Secondly, the relators of the form $[s_i,s_j][s_{j+k},s_{i+k}]$ force $[s_i,s_j]=[s_{i+k},s_{j+k}]$ for all $i, j, k\in\Z$, so each commutator of the form $[s_i,s_j]$ is an element of $U$.

In particular, for positive natural $i$ we have $[s_{-i},s_0]=[s_0,s_i]$ from which it follows that $[s_0,s_{-i}]=[s_0,s_i]^{-1}$.

\underline{Claim (a):} The set $U$ is a generating set for the commutator subgroup \([\Gamma_0, \Gamma_0]\) of \(\Gamma_0\), which is contained in the center of \(\Gamma_0\).

\newcommand{\ib}{\overline {i}\,}
\newcommand{\jb}{\overline {j}\,}
\newcommand{\kb}{\overline {k}\,}
\newcommand{\rb}{\overline {r}\,}
{\flushleft {\it Proof of claim (a)}:}\\
For notational purposes in calculations, for $i\in\Z \setminus \{0\}$, we will use $i$ to represent $s_i$ and $\ib$ to represent $s_i^{-1}$.  For a word $w\in (S\cup S^{-1})^*$, we use $\overline {w}$ for the inverse word so that $w\overline w=1_{\Gamma_0}$.

The first observation we make below is that if we have a commutator $c$ of two words each of length one, then this can be re-expressed as a commutator of positive generators from $S$, so such a commutator is in the centre of $\Gamma_0$ and in fact there is a positive integer $j$ so that $c=u_j^{\pm 1}$.

To verify this, see the following two calculations using our notational convention mentioned earlier (remembering that \(U\) is contained in the center of \(\Gamma_0\)).
\begin{align*}
    [\ib,j]&=i\jb \ib j=i\jb \ib ji\ib=i[j,i]\ib=[j,i]i\ib=[j,i]
\end{align*}

\begin{align*}
    [\ib,\jb]&=ij \ib \jb=ij \ib \jb ij\jb\ib=ij[i,j]\jb\ib=[i,j]\\
\end{align*}

One can now use standard commutator identities to express any commutator of two words from $(S\cup S^{-1})^*$ as products of conjugates of commutators involving subwords of the two words, where each commutator is shorter in length than the original. To that end, let $x,y,z\in (S\cup S^{-1})^*$ be non-trivial.  We have the standard commutator equivalences

\begin{align*}
[x,zy]&=[x,y]\cdot [x,z]^{y}
\end{align*}
and 
\begin{align*}
[zy,x]&=[z,x]^y\cdot[y,x].
\end{align*}
(where we use the convention $a^b=b^{-1}ab$ for conjugation).

Using these and our process of removing inverse generators from short commutators, we may inductively break any commutator in $\Gamma_0$ down into products of conjugates of commutators of pairs of generators.  By our relations we may ignore conjugation of commutators of pairs of generators, thus it follows that the set $U$ generates the commutator subgroup of $\Gamma_0$, which is also in the center of $\Gamma_0$ (as \(U\) is contained in the center of \(\Gamma_0\)). 

This completes the proof of Claim (a).\hfill$\diamond$

\vspace{.05 in}

\underline{Claim (b):} The commutator subgroup $[\Gamma_0,\Gamma_0]$ is free abelian\footnote{It is claimed in point (a) of the proof of the lemma in \cite{delaHarpe}, that the commutator subgroup $[\Gamma_0,\Gamma_0]<\Gamma_0$ is the free abelian group on the set $\{u_i\mid i\in\Z, i\neq 0\}$, but this is false because of the identities $[s_0,s_{-i}]=[s_0,s_i]^{-1}$.} with free generating set \(U_+:= \{u_i\mid i>0\}\).

{\flushleft {\it Proof of claim (b)}:}\\
We have already shown that this group is abelian and generated by \(U_+\). 
We show that there are no more relations by constructing an explicit group homomorphism from \(\Gamma_0\) to a group such that the elements in the image of \(U_+\) satisfy no more relations. 
Consider the set
\[(\oplus_{i\in \Z} \Z) \times (\oplus_{i\in \Z_{>0}} \Z)\]
(where \(\oplus\) means take all tuples with only finitely many non-zero coordinates) with operation
\[((a_i)_{i\in \Z}, (b_i)_{i>0}) \cdot ((c_i)_{i\in \Z}, (d_i)_{i>0})= \left(\left(a_i + c_i\right)_{i\in \Z}, \left(b_i + d_i + \sum_{\substack{k, l\\
l - k = i}} a_lc_k\right)_{i>0}\right). \]
This operation is associative as
\[(((a_{i, 0})_{i\in \Z}, (b_{i, 0})_{i>0}) \cdot ((a_{i, 1})_{i\in \Z}, (b_{i, 1})_{i>0})) \cdot ((a_{i, 2})_{i\in \Z}, (b_{i, 2})_{i>0})\]

\[= \left(\left(a_{i, 0} + a_{i, 1}\right)_{i\in \Z}, \left(b_{i, 0} + b_{i, 1} + \sum_{\substack{k, l\\
l - k = i}} a_{l, 0}a_{k, 1} \right)_{i>0}\right) \cdot ((a_{i, 2})_{i\in \Z}, (b_{i, 2})_{i>0})\]
\[= \left(\left(a_{i,0} + a_{i, 1} + a_{i, 2}\right)_{i\in \Z}, \left(b_{i, 0} + b_{i, 1} + b_{i, 2} + \sum_{\substack{k, l\\
l - k = i}} a_{l, 0}a_{k, 1}  + \sum_{\substack{k, l\\
l - k = i}} (a_{l, 0} + a_{l, 1})a_{k, 2} \right)_{i>0}\right)\]
\[= \left(\left(a_{i, 0} + a_{i, 1} + a_{i, 2}\right)_{i\in \Z}, \left(b_{i, 0} + b_{i, 1} + b_{i, 2} + \sum_{\substack{0\leq c< d\leq 2}} \sum_{\substack{k, l\\
l - k = i}} a_{l, c}a_{k, d}\right)_{i>0}\right).\]

Thus we have a semigroup which we will denote by \(\Gamma_0'\). 
It then follows immediately from the definition that \(\Gamma_0'\) is a group with the inverse operation given by: 
\[((a_i)_{i\in \Z}, (b_i)_{i>0})^{-1} = \left((-a_i)_{i\in \Z}, \left( \sum_{\substack{k, l\\
l - k = i}} a_la_k  -b_i\right)_{i>0}\right).\]
It follows from von Dyck's Theorem  that the map \(\phi\) from \(S\) to \(\Gamma_0'\) defined by
\((s_i)\phi = ((a_j)_{j\in \Z}, (0)_{i> 0})\) where \(a_j=0\) for \(j \neq i\) and \(a_i = 1\) extends to a group homomorphism over \(\Gamma_0\) if and only if the required relations \(R\) are satisfied. 
Note that for all \(i, j \in \Z\) with \(j>i\) we have

\begin{align*}
    ((s_i)\phi)^{-1}((s_j)\phi)^{-1}(s_i)\phi(s_j)\phi&= ((s_i)\phi)^{-1}((s_j)\phi)^{-1} ((\ldots  00\stackengine{2pt}
  { 1}
  {\scriptstyle i^{th}\text{ position}\rule[1pt]{2ex}{.5pt}\mkern-5mu\raisebox{1.1pt}
  {\stackon[-.1pt]{\rule{.5pt}{9.1pt}}{\uparrow}}}{U}{r}{F}{T}{S}
00\ldots 00
\stackengine{2pt}
  {1}
  {\scriptstyle\raisebox{1.1pt}{\stackon[-.1pt]{\rule{.5pt}{9.1pt}}{\uparrow}}
  \mkern-5mu\rule[1pt]{2ex}{.5pt} j^{th}\text{ position}}{U}{l}{F}{T}{S}00 \ldots ),(00\ldots   ))\\
  &= ((s_i)\phi)^{-1} ((\ldots  00\stackengine{2pt}
  { 1}
  {\scriptstyle i^{th}\text{ position}\rule[1pt]{2ex}{.5pt}\mkern-5mu\raisebox{1.1pt}
  {\stackon[-.1pt]{\rule{.5pt}{9.1pt}}{\uparrow}}}{U}{r}{F}{T}{S}
00 \ldots ),(00\ldots  00(-\stackengine{2pt}
  {1}
  {\scriptstyle\raisebox{1.1pt}{\stackon[-.1pt]{\rule{.5pt}{9.1pt}}{\uparrow}}
  \mkern-5mu\rule[1pt]{2ex}{.5pt}(j-i)^{th}\text{ position}}{U}{l}{F}{T}{S})00\ldots ))\\
    &= ((\ldots  00 \ldots ),(00\ldots  00(-\stackengine{2pt}
  {1}
  {\scriptstyle\raisebox{1.1pt}{\stackon[-.1pt]{\rule{.5pt}{9.1pt}}{\uparrow}}
  \mkern-5mu\rule[1pt]{2ex}{.5pt} (j-i)^{th}\text{ position}}{U}{l}{F}{T}{S})00\ldots )),
\end{align*}

and for \(j<i\) we have

\begin{align*}
    ((s_i)\phi)^{-1}((s_j)\phi)^{-1}(s_i)\phi(s_j)\phi&= ((s_i)\phi)^{-1}((s_j)\phi)^{-1} ((\ldots  00\stackengine{2pt}
  { 1}
  {\scriptstyle i^{th}\text{ position}\rule[1pt]{2ex}{.5pt}\mkern-5mu\raisebox{1.1pt}
  {\stackon[-.1pt]{\rule{.5pt}{9.1pt}}{\uparrow}}}{U}{r}{F}{T}{S}
00\ldots 00
\stackengine{2pt}
  {1}
  {\scriptstyle\raisebox{1.1pt}{\stackon[-.1pt]{\rule{.5pt}{9.1pt}}{\uparrow}}
  \mkern-5mu\rule[1pt]{2ex}{.5pt} j^{th}\text{ position}}{U}{l}{F}{T}{S}00 \ldots ),(00\ldots  00\stackengine{2pt}
  {1}
  {\scriptstyle\raisebox{1.1pt}{\stackon[-.1pt]{\rule{.5pt}{9.1pt}}{\uparrow}}
  \mkern-5mu\rule[1pt]{2ex}{.5pt} (i-j)^{th}\text{ position}}{U}{l}{F}{T}{S}000\ldots ))\\
 &= ((s_i)\phi)^{-1}((\ldots  00\stackengine{2pt}
  { 1}
  {\scriptstyle i^{th}\text{ position}\rule[1pt]{2ex}{.5pt}\mkern-5mu\raisebox{1.1pt}
  {\stackon[-.1pt]{\rule{.5pt}{9.1pt}}{\uparrow}}}{U}{r}{F}{T}{S}
00 \ldots ),(00\ldots  00\stackengine{2pt}
  {1}
  {\scriptstyle\raisebox{1.1pt}{\stackon[-.1pt]{\rule{.5pt}{9.1pt}}{\uparrow}}
  \mkern-5mu\rule[1pt]{2ex}{.5pt}(i-j)^{th}\text{ position}}{U}{l}{F}{T}{S}000\ldots ))\\
   &= ((\ldots  00 \ldots ),(00\ldots  00\stackengine{2pt}
  {1}
  {\scriptstyle\raisebox{1.1pt}{\stackon[-.1pt]{\rule{.5pt}{9.1pt}}{\uparrow}}
  \mkern-5mu\rule[1pt]{2ex}{.5pt}(i-j)^{th}\text{ position}}{U}{l}{F}{T}{S}000\ldots )).\\
\end{align*}
In both of the above cases the only non-zero entry of \([(s_i)\phi, (s_j)\phi]\) is in the \(|i-j|\) coordinate of the second coordinate and its value is determined by which is bigger. Thus the required relations defined by the set \(R\) hold.
Moreover, this map sends the set \(U\) to a free basis for the free abelian group \(\{(0)_{i\in \Z}\} \times \oplus_{i>0} \Z\leq \Gamma_0'\), hence \(U\) does indeed generate a free abelian group.

This completes the proof of Claim (b).\hfill$\diamond$

\vspace{.05 in}

Now returning to the proof in \cite{delaHarpe}, let $\phi:\Gamma_0\to\Gamma_0$ be the automorphism which maps $s_i$ to $s_{i+1}$ for all indices $i$.  Because of the second part of the relations $R$, the automorphism $\phi$ fixes the set $U$ pointwise, and so, fixes the commutator subgroup $[\Gamma_0,\Gamma_0]$ pointwise.  Also observe that the action of $\langle\phi\rangle$ on the set $S$ is transitive.

We define a group
\[
\Gamma \seteq \langle S\cup \{t\}\mid R\cup \{s_i^ts_{i+1}^{-1} \mid i\in \mathbb{Z}\}\rangle.
\]
 It follows from the construction that $\Gamma\cong\Gamma_0\rtimes_{\phi}\Z$, with conjugation by $t$ acting as the automorphism $\phi$, and $t$ generating the $\Z$ factor in the quotient of $\Gamma$ by $\Gamma_0$.  We consider in the background the homomorphism $\psi:\Fab\to\Gamma$ determined by $a\mapsto s_0$ and $b\mapsto t$.  
Distinct normal subgroups of $\Gamma$ are then the images of distinct normal subgroups of $\Fab$, so we reduce the problem to finding continuum many normal subgroups of $\Gamma$. 
\footnote{From the first claim $\Gamma_0$ is nilpotent of class 2.  Further, it is immediate that $\Gamma$ is solvable with derived length three, but not nilpotent as 
\([s_i,t]=s_i^{-1}s_{i+1}\), \([s_i^{-1}s_{i+1},t]=s_{i+1}^{-1}s_is_{i+1}^{-1}s_{i+2}\)
and inductively, if we keep commutating the result of the previous calculation by $t$, the rightmost term of the resulting product will be a sole occurrence of $s_{i+k}$ (where we have carried out $k$ commutations).}

We now complete the proof of the lemma.  Let $X$ be any subset of $\N\backslash\{0\}$,\footnote{\cite{delaHarpe} uses $\Z\backslash\{0\}$ here, not recognizing e.g. that $u_{-3}=u_3^{-1}$.}
and let $N_X$ denote the subgroup of $[\Gamma_0,\Gamma_0]$ generated by $U_X\seteq\{u_x\mid x\in X\}$.  As $[\Gamma_0,\Gamma_0]$ is central in $\Gamma$, it follows that each $N_X$ is normal, and that $\Gamma$ therefore admits continuum many normal subgroups.

\end{proof}

There is now an observation in \cite{delaHarpe} that Lemma \ref{lem:uncountablyManyNormalSubgroupsF2} in effect shows $\Fab$ admits continuum many normal subgroups containing $D^3\Fab$ (the third derived subgroup of $\Fab$).  This is a result of Hall from 1954 (see \cite{Hall54}) where he actually shows that there is a group $G$ satisfying $[D^2G,G]=1$ so that there are continuum many normal subgroups of a group $G$.  Hall in that paper also shows that the number of normal subgroups of $\Fab$ containing $D^2\Fab$ is countable.

A version of the lemma in Point 42 of \cite{delaHarpe} is as follows:
\begin{lem}[\cite{delaHarpe}, III.C., 42]\label{lem:uncountablyManyNonIsoQuotientsF2}
For a finitely-generated group $\Gamma$, TFAE:
\begin{enumerate}
    \item $\Gamma$ has continuum many normal subgroups; and
    \item $\Gamma$ has continuum many pairwise non-isomorphic quotients.
\end{enumerate}
\end{lem}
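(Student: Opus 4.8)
The plan is to prove both implications by a counting argument, the crux being that finite generation of $\Gamma$ forces each isomorphism type of quotient to be realised by only countably many normal subgroups. I would first record the standing bounds: since $\Gamma$ is finitely generated it is countable, so its normal subgroups form a subfamily of the power set of a countable set and there are at most continuum many of them; a fortiori there are at most continuum many quotients and at most continuum many isomorphism classes of quotients.

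The implication (2)\,$\Rightarrow$\,(1) is immediate. If $\{Q_\alpha\}$ is a family of continuum many pairwise non-isomorphic quotients, then for each $\alpha$ I choose a normal subgroup $N_\alpha \trianglelefteq \Gamma$ with $\Gamma/N_\alpha \cong Q_\alpha$. Distinct indices yield distinct $N_\alpha$, since $N_\alpha = N_\beta$ would force $Q_\alpha \cong \Gamma/N_\alpha = \Gamma/N_\beta \cong Q_\beta$, contradicting pairwise non-isomorphism. Hence $\Gamma$ has continuum many normal subgroups.

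For (1)\,$\Rightarrow$\,(2) I would fix a finite generating set $g_1,\dots,g_k$ of $\Gamma$ and bound, for a fixed countable group $Q$, the number of normal subgroups $N$ with $\Gamma/N \cong Q$. Any such $N$ is the kernel of a surjection $\Gamma \twoheadrightarrow Q$, obtained by post-composing the quotient map with an isomorphism $\Gamma/N \cong Q$. A homomorphism $\Gamma \to Q$ is determined by the images of $g_1,\dots,g_k$, so there are at most $|Q|^k \le \aleph_0$ of them, and therefore at most countably many kernels $N$ with $\Gamma/N\cong Q$. Thus the map $N \mapsto [\Gamma/N]$ sending a normal subgroup to the isomorphism class of its quotient has countable fibres.

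I would finish with cardinal arithmetic. Partition the (continuum many, by (1)) normal subgroups of $\Gamma$ according to the isomorphism type of the corresponding quotient. If there were only countably many types, the total count would be at most $\aleph_0 \cdot \aleph_0 = \aleph_0$, contradicting (1); so the number $\kappa$ of types is uncountable, hence infinite. Since $\kappa$ is infinite we have $\kappa \cdot \aleph_0 = \kappa$, so from continuum $\le \kappa \cdot \aleph_0 = \kappa$ together with $\kappa \le$ continuum (first paragraph) we conclude $\kappa$ equals continuum, i.e. $\Gamma$ has continuum many pairwise non-isomorphic quotients. The one genuine obstacle is the countability of the fibres in the previous paragraph, and this is precisely where finite generation is essential: without it a single isomorphism type could arise from uncountably many kernels and the arithmetic in this paragraph would fail.
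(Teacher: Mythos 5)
Your proof is correct and takes essentially the same approach as the paper's: both hinge on the observation that a homomorphism from a $k$-generated group to a countable group $Q$ is determined by the images of the $k$ generators, so each isomorphism type of quotient accounts for at most countably many normal subgroups. Your direct fibre-counting version is in fact slightly more complete than the paper's proof by contradiction, since you also record the upper bound $\kappa \le 2^{\aleph_0}$ and the cardinal arithmetic needed to conclude exactly continuum many (rather than merely uncountably many) isomorphism classes.
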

\begin{proof}
That (ii) implies (i) is immediate just by considering the kernels of the quotient maps.  Thus we need only show that (i) implies (ii).

Let us now assume $\Gamma$ is generated by $k$ generators for some natural $k$, has continuum many normal subgroups, and only countably many quotients up to isomorphism.  In this case there must be a countable group $Q$ and continuum many distinct quotient maps from $\Gamma$ to $Q$.  However, this is impossible since  any homomorphism from $\Gamma$ to $Q$ is determined by where the generators of $\Gamma$ are sent, and there are only countably many sequences of length $k$ in $Q$.
\end{proof}

We have seen from Lemma \ref{lem:uncountablyManyNormalSubgroupsF2} and Lemma \ref{lem:uncountablyManyNonIsoQuotientsF2} that the free group $\Fab$ admits continuum many non-isomorphic quotients.  In fact, the group $\Gamma$, which is a solvable quotient of $\Fab$ with derived length three, has continuum many non-isomorphic quotients.  We are now in a position to prove Theorem \ref{thm:uncountableNonRE} mentioned in Subsection \ref{subsec:Results}: specifically, we argue that $\Gamma$ has many quotients which are not epiRE (and in fact we distinguish continuum many such isomorphism classes). 
\thmUncountable*
\begin{proof}
The first point to observe is that the set of subsets of $\N\backslash\{0\}$ admits chains of size continuum under the containment relation.
This can be seen for example by noting that the set of downwards closed subsets of \((\mathbb{Q}, \leq)\) is of size continuum and totally ordered by \(\subseteq\) (\(\mathbb{Q}\) has the same cardinality as $\N\backslash\{0\}$).

Our main result now follows easily from our construction. For every subset \(X\subseteq \N\backslash\{0\}\), let \(N_X\) denote the normal subgroup of \(\Gamma\) generated by the set \(\{u_i\mid i \in X\}\). 
For each normal subgroup $N_X$, the quotient group $Q_X=\Gamma/N_X$  is determined, and for each $Q_X$ let us re-use $Y\seteq \{s,s^{-1},t, t^{-1}\}$ as symbols for the induced generating set of $Q_X$ (that is, e.g., $s\in Q_X$ is formally the coset $sN_X$ of $N_X$ in $\Gamma$, and the four cosets induced from $Y$ under the natural quotient map form a generating set for $Q_X$).

Suppose for a contradiction that only countably many of the quotients \(Q_X\) are not epiRE up to isomorphism. By the same argument given in Lemma \ref{lem:uncountablyManyNonIsoQuotientsF2}, there are only countably many sets \(X\) (up to equality) for which \(Q_X\) is not epiRE.
From this point on we only consider the subsets \(X\subseteq \mathbb{N}\backslash\{0\}\) which are contained in some fixed continuum sized chain of subsets of \(\mathbb{N}\backslash\{0\}\). We may also choose this chain such that all of the corresponding groups \(Q_X\) are epiRE (by removing the countably many problematic sets).

As there are (up to relabelling) only countably many Turing machines which use the alphabet $Y$, we must have that one of these machines must determine a language $\mathcal{L}_{Y}$ which demonstrates that two distinct groups $Q_X$ and $Q_{X'}$ arising in our construction are epiRE and we may assume $X'\subsetneq X$.

As the sets $X$ and $X'$ are distinct, there is some $u_i$ a generator of $N_X$ which is not in the group $N_{X'}$.  As \(\mathcal{L}_Y\) demonstrates that \(Q_{X'}\) is epiRE, there is a word $w\in \mathcal{L}_Y$ representing the non-trivial image of $u_i$ in $Q_{X'}$.
 As \(\mathcal{L}_Y\) demonstrates that \(Q_{X}\) is epiRE and \(w\in \mathcal{L}_Y\), the word \(w\) represents a non-trivial element of $Q_{X}$.
But as \(w\) represents $u_iN_{X'}$ in \(Q_{X'}\) and \(u_iN_{X'}\subseteq u_iN_{X}\), \(w\) represents the element \(u_iN_{X}\) of \(Q_X\).
This is a contradiction as \(u_iN_{X}=N_X\) is the identity of \(Q_X\).
\end{proof}
\subsection{Continuum many epiregular groups}\label{subsec:uncountableEpiRegular}
\thmUncountableEpiReg*

\begin{proof}
    Consider the groups
\[G_{I}:=\left\langle a, y, z \mid a^2=z^2=[z,a]=[z,t]=1, [t^nat^{-n},a]=\begin{cases}
    1& n\in I \\
    z& n\not\in I 
\end{cases}
\right\rangle\]
where \(I\subset \{1, 2, \ldots\}\).
By the comments in the proof of Corollary 1.6 on page 8 of \cite{FFG}, the group \(G_I\) is a central extension of the cyclic group of order 2 by the lamplighter group and \(G_I\cong G_J\iff I=J\). It follows that there are continuum many epiregular groups up to isomorphism. 
\end{proof}
We obtain the following corollaries
\begin{cor}
Suppose C\,\,\(\in\{Regular, CF, CS, RE\}\).
We obtain the following surprising corollaries:
\begin{itemize}
    \item There is a language \(L\) in \(C\) which alone can demonstrate the epiCness of uncountably many groups up to isomorphism.
    \item  There is an epiC group with unsolvable word problem.
    \item There is a bijection between the isomorphism classes of f.g. epiC groups and the isomorphism classes of f.g. not epiC groups.
\end{itemize}
\end{cor}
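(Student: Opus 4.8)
The plan is to derive all three statements from cardinality/counting arguments layered on top of Theorems \ref{thm:unctbleEpiReg} and \ref{thm:uncountableNonRE}, together with Theorem \ref{thm:SolvableWP}. Two counting facts underlie everything. First, (a) for each language class $C \in \lclasses$ there are only countably many languages in $C$ up to relabeling of the underlying finite alphabet, since every such language is specified by a single finite acceptor machine (an FSA, PDA, LBA, or TM), and there are only countably many machines of each type; moreover there are only countably many finite alphabets up to relabeling. Second, (b) there are at most continuum many finitely generated groups up to isomorphism, since each is a quotient $F_k/N$ of a free group of finite rank by a normal subgroup, and $F_k$ has at most $2^{\aleph_0}$ subgroups.

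For the first bullet I would argue by pigeonhole. By Theorem \ref{thm:unctbleEpiReg} there are continuum many pairwise non-isomorphic epiRegular groups, and each is in particular epiC since $\mathrm{Regular} \subseteq C$. Choose for each such group a witnessing finite generating set together with a demonstrating language in $C$; relabeling the generating set as an abstract alphabet assigns to each group a pair consisting of a finite alphabet and a language in $C$. By fact (a) there are only countably many such pairs, so a single pair—hence a single language $L \in C$ over a fixed abstract alphabet $X$—must serve as the witness for continuum many, hence uncountably many, pairwise non-isomorphic groups, where in each group the letters of $X$ are read as the appropriate generators. This $L$ alone demonstrates epiCness for all of them.

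For the second bullet the key point is that there are only countably many isomorphism classes of finitely generated groups with solvable word problem: any such group is recursively presented (its set of relators, being the word problem, is recursively enumerable), and there are only countably many recursively enumerable presentations. The continuum many epiRegular groups of Theorem \ref{thm:unctbleEpiReg} are all epiRE, so by Theorem \ref{thm:SolvableWP} those among them with solvable word problem are precisely those admitting a recursively enumerable presentation; as just noted these are only countably many. Hence continuum many of these groups—all epiRegular, and so epiC—have unsolvable word problem, which furnishes the desired example.

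For the third bullet I would only need to pin down two cardinalities. The class of finitely generated epiC groups contains the continuum many epiRegular groups of Theorem \ref{thm:unctbleEpiReg}, and by fact (b) is bounded above by continuum, so it has exactly $2^{\aleph_0}$ isomorphism classes. The class of finitely generated groups that are not epiC contains the continuum many non-epiRE groups of Theorem \ref{thm:uncountableNonRE} (non-epiRE implies non-epiC, since $\mathrm{epiC} \subseteq \mathrm{epiRE}$), and is again bounded above by continuum, so it too has exactly $2^{\aleph_0}$ isomorphism classes. Two sets of equal cardinality admit a bijection, which is the claim. I expect the main—though modest—obstacle throughout to be stating fact (a) carefully enough: one must track alphabets up to relabeling and the interpretation of alphabet letters as group generators, so that a genuinely single language can be asserted to demonstrate epiCness simultaneously for uncountably many non-isomorphic groups.
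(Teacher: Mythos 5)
Your proposal is correct and matches what the paper intends: the corollary is stated without an explicit proof, and the intended justification is exactly the cardinality and pigeonhole arguments you give, layered on Theorems~\ref{thm:unctbleEpiReg}, \ref{thm:uncountableNonRE} and \ref{thm:SolvableWP} (indeed, in the paper's construction all the groups $G_I$ already share one generating alphabet, so your relabeling step is even slightly more careful than necessary). Your handling of the subtle points---countably many acceptor machines, countably many recursively presented groups, and the upper bound of continuum on isomorphism classes of finitely generated groups---is sound.
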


\begin{remark} As the class of groups with rational cross section is closed under extensions and includes the lamplighter group \cite{bodartrational} the proof above actually shows that there are continuum many groups with rational cross section .
\end{remark}

\section{Questions}\label{sec:questions}
In this section we will explore some questions which are natural given the explorations above.

Having introduced the hierarchy of epiC groups, we wish to understand the relationship between these classes of groups and whether the classes can be separated in a way that is analogous to the Chomsky hierarchy. More precisely we have the following question.

\begin{question}
    Let C$_1$ and C$_2$ be distinct language classes in the Chomsky hierarchy such that C$_1$ is a subclass of C$_2$. Does there exist a group in epiC$_2$ but not epiC$_1$?

\end{question} 
If the answer to the previous question is affirmative, then we are also interested in the following related question.
\begin{question} 
    Let C$_1$ and C$_2$ be distinct language classes such that C$_1$ is a subclass of C$_2$. Does there exist a reasonable group theoretic property $\mathcal{P}$ such that a group $G \in$epi$C_1$ is equivalent to $G$ has $\mathcal{P}$ and $G \in$epiC$_2$?
\end{question}
It is known that finitely generated infinite torsion groups do not admit rational cross sections \cite{Gilman1987,Ghysbook}, so there is a fair chance that these groups are not epiRegular.  However, Grigorchuk's first group $\Gamma$ is known to be co-indexed \cite{HoltRovercoindexed}, and so $\Gamma\in$ epiCS.  Thus, there is a chance that Grigorchuk's group can be used to separate the classes epiRegular and epiCS.  Specifically, these considerations lead us to ask the following questions.

\begin{question} \label{q: epiReg vs rat}
Does there exist an epiRegular group which does not admit a rational cross section?
\end{question}

We would be happy even with stronger versions of the above question, for instance if we assume the group is recursively presented or finitely presented.

\begin{question}
Is Grigorchuk's first group $\Gamma$ in epiRegular?
\end{question}
If Grigorchuk's group is epiRegular then, being epiRegular is not equivalent to admitting a rational cross section. On the other hand, if Grigorchuk's group is not epiRegular then, being epiRegular is not equivalent to being epiCS.

Further, it is shown in \cite{bodartrational} that $C_2 \wr (C_2 \wr \mathbb{Z})$ does not have rational cross-section. We currently do not know whether or not $C_2 \wr (C_2 \wr \mathbb{Z})$ is epiRegular. Understanding this may also prove useful in answering Question \ref{q: epiReg vs rat}. 

\begin{question} 
    Is $C_2 \wr (C_2 \wr \mathbb{Z})$, the standard restricted wreath product of $C_2$ with the Lamplighter group, an epiRegular group? 
\end{question}

\bibliographystyle{amsplain}
\bibliography{bibfiles.bib}
\vspace{.2 in}

\flushleft{Raad Al Kohli, 
\texttt{rsasak@st-andrews.ac.uk}\\
School of Mathematics \& Statistics, University of St Andrews, St Andrews, UK\\[5pt]
Collin Bleak, 
\texttt{collin.bleak@st-andrews.ac.uk}\\
School of Mathematics \& Statistics, University of St Andrews, St Andrews, UK\\[5pt]
Luna Elliott,
\texttt{luna.elliott@manchester.ac.uk}\\
Department of Mathematics, The University of Manchester, Manchester, UK}
\end{document}